\newtheorem{thm}{Theorem}[section]
\newtheorem{lem}[thm]{Lemma}
\begin{document}
\begin{frontmatter}
\title{\textbf{The reduced formula of the characteristic polynomial of hypergraphs and the spectrum of hyperpaths\\ }}

\author{Changjiang Bu}\ead{buchangjiang@hrbeu.edu.cn}
\author{Lixiang Chen}


\address{College of Mathematical Sciences, Harbin Engineering University, Harbin 150001, PR China}

\begin{abstract}
In this paper, we give a reduced formula of the characteristic polynomial of  $k$-uniform hypergraphs with a pendant edge. And the explicit characteristic polynomial and all distinct eigenvalues of $k$-uniform  hyperpath are given.
\end{abstract}

\begin{keyword}
Hypergraph, Tensor, Characteristic polynomial, Reduced formula, Poisson formula\\
\emph{AMS classification:} 05C50, 05C65, 15A18
\end{keyword}
\end{frontmatter}

\section{Introduction}

For a positive integer $n$, let $\left[ n \right] = \left\{ {1, \ldots ,n} \right\}$. A $k$-order $n$-dimension complex tensor $\mathcal{T} = \left( {{t_{{i_1} \cdots {i_k}}}} \right) $ is a multidimensional array with $n^k$ entries on complex number field $\mathbb{C}$, where ${i_j} \in \left[ n \right]$, $j = 1, \ldots ,k$. Denote the set of $n$-dimension complex vector and the set of $k$-order $n$-dimension complex tensor by $\mathbb{C}^n$ and $\mathbb{C}^{[k,n]}$, respectively. For $x = {\left( {{x_1}, \ldots ,{x_n}} \right)^{\rm{T}}} \in {\mathbb{C}^n}$, $\mathcal{T}{x^{k - 1}}$ is a vector in $\mathbb{C}^{n}$ whose $i$-th component is defined as
\[{\left( {\mathcal{T}{x^{k - 1}}} \right)_i} = \sum\limits_{{i_2}, \ldots ,{i_k}=1}^n {{t_{i{i_2} \cdots {i_k}}}{x_{{i_2}}} \cdots {x_{{i_k}}}} .\]
If there exist $\lambda \in \mathbb{C}$ and a nonzero vector $x =(x_1,x_2,\ldots,x_n)^{\mathrm{T}}\in {\mathbb{C}^n}$ such that $\mathcal{T}{x^{k - 1}} = \lambda {x^{\left[ {k - 1} \right]}}$, then $\lambda$ is called an \emph{eigenvalue} of $\mathcal{T}$ and $x$ is called an \emph{eigenvector} of $\mathcal{T}$ corresponding to $\lambda$, where ${x^{\left[ {k - 1} \right]}} = {\left( {x_1^{k - 1}, \ldots ,x_n^{k - 1}} \right)^{\rm{T}}}$ (see \cite{Lim,Qi}). The \emph{characteristic polynomial }${\phi _\mathcal{T}}\left( \lambda  \right)$ of tensor $\mathcal{T}$ is defined as the resultant ${\rm{Res}} {{{(\lambda {x^{[k - 1]}} - \mathcal{T}{x^{k - 1}})}}}$. And ${\phi _\mathcal{T}}\left( \lambda  \right)$ is a monic polynomial in $\lambda$ of degree $n(k-1)^{n-1}$ (see \cite{cooper xing}).

A hypergraph $H=(V(H),E(H))$ is called \emph{$k$-uniform} if each edge of $H$ contains exactly $k$ distinct vertices. When $k=2$, $H$ is a graph.  The tensor  $\mathcal{A}_H=(a_{i_1i_2\ldots i_k}) \in \mathbb{C}^{[k,n]}$ is the \emph{adjacency tensor} of a $k$-uniform hypergraph $H$ with vertex set $V(H)=\{1,2,\ldots,n \}$, where
\[{a_{{i_1}{i_2} \ldots {i_k}}} = \left\{ \begin{array}{l}
 \frac{1}{{\left( {k - 1} \right)!}},{\kern 37pt}\mathrm{ if}{\kern 2pt}{ \left\{ {{i_1},{i_2},\ldots ,{i_k}} \right\} \in {E(H)}}, \\
 0, {\kern 57pt}\mathrm{ otherwise}. \\
 \end{array} \right.\]
The characteristic polynomial of tensor $\mathcal{A}_H$ is called the characteristic polynomial of the hypergraph $H$ (see \cite{cooper}). For a vector $y=(y_{i_1},y_{i_2},\ldots,y_{i_t})^\mathrm{T} \in \mathbb{C}^t$, let ${y^S} = \prod\nolimits_{i \in S} {{y_i}}$ for $S \subseteq\{i_1,i_2,\ldots, i_t\}$, where $i_1,i_2,\ldots,i_t$ are distinct nonnegative integers. For a vertex $i \in V(H)$,  ${E_i}(H)$ denotes the set of edges incident with vertex $i$. Then
\[{\left( {\mathcal{A}_H{x^{k - 1}}} \right)_i} = \sum\limits_{e\in E_i(H)} {x^{e\setminus \{i\}}} \]
for $x = {\left( {{x_1}, \ldots ,{x_n}} \right)^{\rm{T}}} \in {\mathbb{C}^n}$ and $i \in [n]$.

The characteristic polynomial of graph is an important research topic in spectral graph theory. In 1962, Harary gave the structural parameter representation of the determinant of the adjacency matrix of graphs \cite{ha}. In 1964, Sachs gave the coefficients of characteristic polynomial which is  usually known as Sachs Coefficient Theorem using the result of Harary \cite{Sachs}. In 1971, Harary et al. gave a reduced formula of the characteristic polynomial of graphs with a pendant edge \cite{diedai2}.
\begin{thm}\cite{diedai2}\label{6}
Let $G_v$ denote the graph obtained from $G$ by adding a pendent edge at the vertex $v$. Let $G-v$ denote the graph obtained from $G$ by removing $v$ together with all edges incident to $v$.
Then
\begin{align} \notag
{\phi _{{G_v}}}(\lambda ) = \lambda{\phi _G}(\lambda ) - {\phi _{G - v}}(\lambda ).
\end{align}
\end{thm}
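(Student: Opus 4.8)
The plan is to observe that in the graph case ($k=2$) the adjacency tensor is just the adjacency matrix $A_G$, and the resultant defining the characteristic polynomial collapses to the ordinary determinant, so that $\phi_G(\lambda) = \det(\lambda I - A_G)$. The whole statement then becomes a determinantal identity, and I would prove it by two successive Laplace (cofactor) expansions of a suitably bordered matrix.

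First I would order the vertices of $G$ as $1,\dots,n$ and give the newly attached pendant vertex the label $n+1$, so that the added edge is $\{v,n+1\}$. With this labeling the only new adjacency is the symmetric pair between $v$ and $n+1$, and hence
\[
\lambda I - A_{G_v} = \begin{pmatrix} \lambda I - A_G & -e_v \\ -e_v^{\mathrm{T}} & \lambda \end{pmatrix},
\]
where $e_v \in \mathbb{C}^n$ is the $v$-th standard basis vector. Expanding $\det(\lambda I - A_{G_v})$ along its last column, only two entries survive: the corner entry $\lambda$, whose complementary minor is $\det(\lambda I - A_G)=\phi_G(\lambda)$ and contributes $\lambda\phi_G(\lambda)$; and the entry $-1$ in row $v$, whose signed cofactor is the remaining term.

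To identify that remaining term I would expand its cofactor along the row originally indexed by $n+1$, which likewise has a single nonzero entry, namely $-1$ in column $v$. Deleting this row and column removes exactly row $v$ and column $v$ from the block $\lambda I - A_G$, leaving $\lambda I - A_{G-v}$; its determinant is $\phi_{G-v}(\lambda)$. Thus the second contribution is $\pm\phi_{G-v}(\lambda)$, and it remains only to pin down the sign.

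The one delicate point — and the step where I expect to do the actual bookkeeping — is collecting the two factors of $-1$ from the off-diagonal border entries together with the two cofactor signs $(-1)^{i+j}$ from the two expansions. A convenient shortcut is to note that conjugating $\lambda I - A_{G_v}$ by the permutation matrix that moves $v$ to the last position among the first $n$ indices leaves the determinant unchanged, so one may assume $v=n$. In that case the surviving off-diagonal entry of the first expansion sits at position $(n,n+1)$ of the full matrix and that of the second expansion at position $(n,n)$ of the resulting minor; the entry values contribute $(-1)\cdot(-1)=+1$ while the cofactor signs contribute $(-1)^{2n+1}\cdot(-1)^{2n}=-1$, so the net sign is $-1$ and the second contribution is $-\phi_{G-v}(\lambda)$. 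Adding the two contributions yields $\phi_{G_v}(\lambda)=\lambda\phi_G(\lambda)-\phi_{G-v}(\lambda)$, as claimed.
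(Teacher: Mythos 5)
Your proof is correct: for $k=2$ the resultant defining $\phi_G(\lambda)$ is the determinant $\det(\lambda I-A_G)$, your block decomposition of $\lambda I - A_{G_v}$ is right, and the two Laplace expansions, with the permutation-similarity reduction to $v=n$, give exactly the net sign $-1$ on $\phi_{G-v}(\lambda)$. But this is a genuinely different route from the paper's treatment. The paper never expands a determinant: it quotes Theorem~\ref{6} from \cite{diedai2} without proof, and then, after establishing its general reduced formula (Theorem~\ref{5}) by means of the Poisson formula for resultants (Lemma~\ref{1}), recovers Theorem~\ref{6} as the $k=2$ specialization. Indeed, when $k=2$ the exponents in Theorem~\ref{5} satisfy $(k-1)^{k-1}-k^{k-2}=0$ and $k^{k-2}=1$, so the formula collapses to
\begin{align}\notag
\phi_{G_v}(\lambda)=\lambda\,\phi_{G-v}(\lambda)\,M_G\Bigl(\lambda,\tfrac{1}{\lambda}\Bigr)
=\lambda\,\phi_{G-v}(\lambda)\left(\frac{\phi_G(\lambda)}{\phi_{G-v}(\lambda)}-\frac{1}{\lambda}\right)
=\lambda\,\phi_G(\lambda)-\phi_{G-v}(\lambda).
\end{align}
The trade-off is instructive. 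Your cofactor argument is elementary and self-contained, but it is intrinsically a matrix argument: the resultant of a system of forms of degree $k-1\ge 2$ admits no Laplace expansion, so nothing in your proof survives for hypergraphs. The paper's route through the Poisson formula is heavier machinery if one only wants the $k=2$ case, but it is precisely the argument that generalizes to all $k$, which is the point of the paper; there the role played by your two cofactor expansions is taken over by factoring the variety $\mathcal{V}^{H_v}$ as $\mathcal{V}^{H}\times\mathcal{V}^{e_0}$ and evaluating $f_0$ separately on the zero and nonzero points of $\mathcal{V}^{e_0}$, with the multiplicities supplied by Theorem~\ref{3}.
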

In 1973, Lov\'{a}sz and Pelik\'{a}n gave the characteristic polynomial of paths \cite{Lovasz}.
\begin{thm}\label{L}
\cite{Lovasz} The characteristic polynomial of a path ${P}_m$ of length $m$ is
\[{\phi _{{P_m}}}(\lambda ) = \sum\limits_{q = 0}^{\left\lfloor {\frac{{m + 1}}{2}} \right\rfloor } {{{( - 1)}^q}\mathrm{C}_{m + 1 - q}^q{\lambda ^{m + 1 - 2q}}} ,\]
where $\mathrm{C}_{m + 1 - q}^q$ is a combinatorial number. \\
\end{thm}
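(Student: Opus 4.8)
The plan is to prove the formula by induction on $m$, using the reduced formula of Theorem~\ref{6} to produce a three-term recurrence and then Pascal's identity to verify that the claimed closed form satisfies it. First I would fix the natural labelling: let $P_m$ have vertex set $\{v_0,v_1,\ldots,v_m\}$ with edges $\{v_{i-1},v_i\}$ for $i\in[m]$, so that $P_m$ arises from $P_{m-1}$ by attaching a pendant edge at the endpoint $v_{m-1}$. Applying Theorem~\ref{6} with $G=P_{m-1}$ and $v=v_{m-1}$, and noting that $P_{m-1}-v_{m-1}=P_{m-2}$, gives
\[
\phi_{P_m}(\lambda)=\lambda\,\phi_{P_{m-1}}(\lambda)-\phi_{P_{m-2}}(\lambda).
\]
The base cases $\phi_{P_0}(\lambda)=\lambda$ and $\phi_{P_1}(\lambda)=\lambda^2-1$ are read off from the $1\times1$ and $2\times2$ adjacency matrices, and both agree with the stated formula at $m=0,1$.

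Next I would write the right-hand side as $f_m(\lambda)=\sum_{q=0}^{\lfloor (m+1)/2\rfloor}(-1)^q\mathrm{C}_{m+1-q}^{q}\lambda^{m+1-2q}$ and show directly that $f_m=\lambda f_{m-1}-f_{m-2}$; since $f_m$ and $\phi_{P_m}$ then satisfy the same recurrence and the same two initial values, they coincide for every $m$. To verify the recurrence I would align powers of $\lambda$: the term $\lambda f_{m-1}$ contributes $(-1)^q\mathrm{C}_{m-q}^{q}\lambda^{m+1-2q}$, while reindexing $f_{m-2}$ by $q\mapsto q-1$ makes $-f_{m-2}$ contribute $(-1)^{q}\mathrm{C}_{m-q}^{q-1}\lambda^{m+1-2q}$. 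Adding these yields the coefficient $(-1)^q\bigl(\mathrm{C}_{m-q}^{q}+\mathrm{C}_{m-q}^{q-1}\bigr)$, which by Pascal's identity equals $(-1)^q\mathrm{C}_{m+1-q}^{q}$, exactly the coefficient appearing in $f_m$.

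I expect the only delicate point to be the bookkeeping of the summation limits: the floor $\lfloor(m+1)/2\rfloor$ jumps by one as $m$ grows, so the three sums $f_m$, $\lambda f_{m-1}$, $f_{m-2}$ do not run over the same index range and their extreme terms must be handled separately. The cleanest way around this is to extend every sum over all $q\ge0$ under the convention $\mathrm{C}_n^{q}=0$ whenever $q<0$ or $q>n$; the three series then become formally identical, Pascal's identity applies term by term, and the finite limits are recovered automatically since $\mathrm{C}_{m+1-q}^{q}=0$ once $2q>m+1$. An alternative route avoiding the recurrence altogether is the Sachs Coefficient Theorem \cite{Sachs}: for the cycle-free graph $P_m$ the only basic figures are disjoint unions of edges, so the coefficient of $\lambda^{m+1-2q}$ is $(-1)^q$ times the number of $q$-edge matchings of $P_m$, which equals $\mathrm{C}_{m+1-q}^{q}$; this is attractive but the inductive argument above is shorter and relies only on results already stated in the excerpt.
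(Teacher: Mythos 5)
Your proof is correct. Note, however, that the paper itself offers no proof of Theorem \ref{L}: the statement is quoted from Lov\'{a}sz and Pelik\'{a}n \cite{Lovasz}, so there is no internal argument to compare against, and your write-up is a genuinely self-contained addition built only from material the paper states. It also meshes well with the paper's own later development: in the proof of Theorem \ref{lu} the authors invoke exactly your recurrence ${\phi _{{P_{j}}}}(\lambda ) = \lambda {\phi _{{P_{j-1}}}}(\lambda ) - {\phi _{{P_{j - 2}}}}(\lambda )$ (their Eq.~(\ref{10})), obtained from Theorem \ref{6} in the same way you obtain it, together with the initial values $\phi_{P_0}(\lambda)=\lambda$ and $\phi_{P_1}(\lambda)=\lambda^2-1$. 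Your verification that the closed form satisfies this recurrence is sound: the two base cases are needed and correct since the recurrence is second order, Pascal's identity $\mathrm{C}_{m-q}^{q}+\mathrm{C}_{m-q}^{q-1}=\mathrm{C}_{m+1-q}^{q}$ handles the induction step, and extending all sums over $q\ge 0$ with the convention that out-of-range binomial coefficients vanish correctly disposes of the boundary bookkeeping, because every coefficient involved is zero once $2q>m+1$. The alternative you sketch via the Sachs coefficient theorem \cite{Sachs} is also valid---for a forest the only Sachs subgraphs are disjoint unions of edges, and the number of $q$-edge matchings of $P_m$ is $\mathrm{C}_{m+1-q}^{q}$---but your recurrence argument has the advantage of relying solely on Theorem \ref{6}, which the paper has already stated, so it is the more economical choice in this context.
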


In \cite{Cve3}(Page 73 of \cite{Cve3}), the author gave all the distinct eigenvalues of  ${P}_m$.
\begin{thm}\label{Cve3}
\cite{Cve3} The distinct eigenvalues of  ${P}_m$ are  $2\cos \frac{\pi t }{{m + 2}}$,  $t=1,2,\ldots,m+1$.
\end{thm}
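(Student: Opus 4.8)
The plan is to convert the combinatorial formula of Theorem~\ref{L} into a closed trigonometric form whose roots can simply be read off, and the cleanest route is through a three-term recurrence. The key structural observation is that a path $P_m$ of length $m$ arises from $P_{m-1}$ by attaching a pendant edge at an end vertex, and deleting that end vertex from $P_{m-1}$ leaves $P_{m-2}$. Applying Theorem~\ref{6} with $G=P_{m-1}$ and $v$ an endpoint therefore gives
\[
\phi_{P_m}(\lambda)=\lambda\,\phi_{P_{m-1}}(\lambda)-\phi_{P_{m-2}}(\lambda),
\]
valid for $m\ge 2$, with base cases $\phi_{P_0}(\lambda)=\lambda$ (a single vertex) and $\phi_{P_1}(\lambda)=\lambda^2-1$ (a single edge), both of which also follow from Theorem~\ref{L}. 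This is exactly the recurrence satisfied by the Chebyshev polynomials of the second kind, which is the reason cosines appear among the eigenvalues.

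Next I would substitute $\lambda=2\cos\theta$ and prove by induction on $m$ that
\[
\phi_{P_m}(2\cos\theta)=\frac{\sin((m+2)\theta)}{\sin\theta}.
\]
The base cases $m=0,1$ reduce to the double-angle and triple-angle formulas. For the inductive step the recurrence yields
\[
\phi_{P_m}(2\cos\theta)=\frac{2\cos\theta\,\sin((m+1)\theta)-\sin(m\theta)}{\sin\theta},
\]
and the product-to-sum identity $2\cos\theta\,\sin((m+1)\theta)=\sin((m+2)\theta)+\sin(m\theta)$ collapses the numerator to $\sin((m+2)\theta)$, closing the induction. Note the substitution $\lambda=2\cos\theta$ is merely a change of variable used to locate roots; it need not be justified by any a priori bound on the spectrum, since the degree count below will certify that we have found all of them.

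Finally I would extract the eigenvalues. For $\theta\in(0,\pi)$ we have $\sin\theta\neq0$, so $\phi_{P_m}(2\cos\theta)=0$ precisely when $\sin((m+2)\theta)=0$, i.e. $\theta=\frac{\pi t}{m+2}$ with $t=1,2,\ldots,m+1$. These $m+1$ angles all lie in $(0,\pi)$, where cosine is strictly decreasing, so the numbers $2\cos\frac{\pi t}{m+2}$ are pairwise distinct roots of $\phi_{P_m}$. Since $\phi_{P_m}$ is monic of degree $m+1$ by Theorem~\ref{L}, these $m+1$ distinct roots account for every root, so they are exactly the distinct eigenvalues of $P_m$ (and each is simple). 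The only genuine work is the closed-form identity, and via the recurrence it is a routine induction relying only on Theorem~\ref{6}; the main thing to get right is the bookkeeping of the product-to-sum step. One could instead prove the same identity directly from the explicit coefficients of Theorem~\ref{L} by expanding $\sin((m+2)\theta)/\sin\theta$ in powers of $2\cos\theta$, but that binomial computation is messier, so I would prefer the recurrence.
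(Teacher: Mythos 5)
Your proof is correct, but there is nothing in the paper to compare it against: Theorem \ref{Cve3} is a quoted result, cited from page 73 of \cite{Cve3} without any proof, and the paper only ever uses its consequence ${\phi _{{P_m}}}(\lambda ) = \prod_{t=1}^{m+1} \left( {\lambda  - 2\cos \frac{\pi t}{{m + 2}}} \right)$ as an ingredient for Theorem \ref{8}. What you supply is the standard self-contained argument via Chebyshev polynomials of the second kind, and it fits the paper's own toolkit exactly: the recurrence $\phi_{P_m}(\lambda)=\lambda\,\phi_{P_{m-1}}(\lambda)-\phi_{P_{m-2}}(\lambda)$ that you derive from Theorem \ref{6} is precisely Eq.~(\ref{10}), which the paper itself establishes inside the proof of Theorem \ref{lu}. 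Your induction giving
\[
\phi_{P_m}(2\cos\theta)=\frac{\sin\bigl((m+2)\theta\bigr)}{\sin\theta},
\qquad \sin\theta\neq 0,
\]
is sound (the base cases $m=0,1$ and the product-to-sum step both check out), and the concluding degree count is the right way to certify completeness: by Theorem \ref{L}, $\phi_{P_m}$ is monic of degree $m+1$, cosine is injective on $(0,\pi)$, so the $m+1$ numbers $2\cos\frac{\pi t}{m+2}$, $t=1,\ldots,m+1$, are pairwise distinct roots that exhaust the spectrum, each necessarily simple. What each approach buys: the paper's citation is shortest, but your derivation makes the statement provable from results already stated in the paper (Theorems \ref{6} and \ref{L}) and, as a by-product, establishes simplicity of all eigenvalues of $P_m$, which is what legitimizes writing $\phi_{P_m}$ as a product of distinct linear factors in the passage leading to Theorem \ref{8}.
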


In 2012, Cooper and Dutle gave the characteristic polynomial of a $k$-uniform hyperpath with one edge \cite{cooper}. In 2015, Shao et al. gave some properties of the characteristic polynomial of hypergraphs whose spectrum are $k$-symmetric \cite{shao}. In 2015, Cooper and Dutle gave the characteristic polynomial of $3$-uniform hyperstars \cite{cooper xing}. In 2019, Bao et al. gave the characteristic polynomial of $k$-uniform hyperstars and the characteristic polynomial of hypergraphs with a cut vertex under some assumptions \cite{Bao}.

In this paper, we give a reduced formula of the characteristic polynomials of $k$-uniform hypergraphs with pendent edges. And using this formula we give  the explicit characteristic polynomial of hyperpaths.  All distinct eigenvalues of $k$-uniform hyperpath are given.

\section{Preliminary}

In this section, we introduce the Poisson formula and some properties of the resultant which are used in the proof of our main results.

The $k$-uniform hyperpath $P_m^{(k)}$ is the $k$-uniform hypergraph which obtained by adding $k-2$ vertices with degree one to each edge of the path $P_m$.  In 2012, Cooper and Dutle  gave the characteristic polynomial of  $P_1^{(k)}$  \cite{cooper}.

\begin{lem}\cite{cooper}\label{2}
The characteristic polynomial of the $k$-uniform hyperpath $P_1^{(k)}$ is
\[{\phi _{P_1^{(k)}}}(\lambda ) = {\lambda ^{k{{(k - 1)}^{k - 1}} - {k^{k - 1}}}}{\left( {{\lambda ^k} - 1} \right)^{{k^{k - 2}}}}.\]
\end{lem}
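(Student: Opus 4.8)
The plan is to compute the resultant $\phi_{P_1^{(k)}}(\lambda)=\mathrm{Res}(\lambda x^{[k-1]}-\mathcal{A}x^{k-1})$ directly, by first locating all eigenvalues of the single edge and then pinning down their multiplicities with the Poisson formula. Here $P_1^{(k)}$ is the lone hyperedge $\{1,\dots,k\}$ on $n=k$ vertices, its adjacency tensor $\mathcal{A}$ satisfies $(\mathcal{A}x^{k-1})_i=\prod_{j\neq i}x_j$, and the characteristic polynomial has degree $n(k-1)^{n-1}=k(k-1)^{k-1}$, so I must account for exactly that many roots with multiplicity. The eigen-equations are the $k$ homogeneous forms $F_i:=\lambda x_i^{k-1}-\prod_{j\neq i}x_j=0$, $i=1,\dots,k$.

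First I would find the eigenvalues. Multiplying all $k$ equations gives $\lambda^k\prod_j x_j^{\,k-1}=\prod_j x_j^{\,k-1}$, hence $(\lambda^k-1)\prod_j x_j=0$; equivalently, whenever every coordinate is nonzero one multiplies $F_i=0$ by $x_i$ to get $\lambda x_i^{k}=\prod_j x_j$ for all $i$, so all $x_i^{k}$ coincide and therefore $\lambda^k=1$. A short case check shows that any eigenvector with a zero coordinate forces a second zero coordinate and then $\lambda=0$. Thus the only eigenvalues are $0$ and the $k$-th roots of unity, so $\phi$ can only have the factors $\lambda$ and $\lambda^k-1$.

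Next I would count, for a fixed root of unity $\lambda_0$, its projective eigenvectors. After rescaling so that $x_i^k$ equals the common value, I may write $x_i=\zeta_i$ with $\zeta_i^k=1$, and the whole system collapses to the single condition $\prod_i \zeta_i=\lambda_0$. There are $k^{k-1}$ such tuples, and modding out by the scaling action of the $k$-th roots of unity (which preserves the condition) leaves exactly $k^{k-2}$ projective eigenvectors for each $\lambda_0$. This is the origin of the exponent in $(\lambda^k-1)^{k^{k-2}}$: if each of these $k^{k-2}$ eigenvectors is a simple point of the eigenscheme, then $(\lambda-\lambda_0)^{k^{k-2}}$ divides $\phi$ for every root of unity, giving total degree $k\cdot k^{k-2}=k^{k-1}$; the factor $\lambda^{\,k(k-1)^{k-1}-k^{k-1}}$ is then forced by comparing degrees, since $0$ is the only remaining eigenvalue.

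The main obstacle is converting these geometric counts into exact multiplicities. I would use the Poisson formula to express the resultant as a product of one form over the common zeros of the remaining $k-1$ forms, which simultaneously (i) detects that the nonzero eigenvectors are transverse, so each contributes multiplicity one, and (ii) accounts for the large block of solutions lying on the coordinate hyperplanes, all of which carry eigenvalue $0$ and assemble into the factor $\lambda^{\,k(k-1)^{k-1}-k^{k-1}}$. Verifying transversality at the root-of-unity eigenvectors (e.g. checking that the Jacobian of $(F_1,\dots,F_k)$ is nonsingular there modulo the Euler scaling direction) and bookkeeping the contribution of the coordinate-hyperplane solutions in the Poisson product are the technical heart; the scalar normalization is then fixed by matching leading coefficients, since $\phi$ is monic of degree $k(k-1)^{k-1}$.
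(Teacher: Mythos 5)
You should first be aware of what you are competing against: the paper does not prove this lemma at all. It is imported from Cooper and Dutle, who obtained it via trace formulas for tensors. What the paper proves (its Theorem \ref{3}) is exactly your argument run backwards: taking Lemma \ref{2} as known, it substitutes into the Poisson factorization $\phi_{P_1^{(k)}}(\lambda)=\lambda^{(k-1)^k}\prod_{p\in\mathcal{V}}\left(f_0(p)\right)^{m(p)}$ and solves for the multiplicities, obtaining $\sum_{0\neq p\in\mathcal{V}}m(p)=k^{k-2}$ and $m(0)=(k-1)^{k-1}-k^{k-2}$ --- precisely the two numbers you propose to compute from scratch. So your plan is a genuinely different, self-contained route; if completed, it would prove both the lemma and the paper's Theorem \ref{3} simultaneously, at the price of doing the local-multiplicity work that the paper deliberately avoids by citing the known formula.

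As written, however, the proposal has genuine gaps, and they sit exactly where you wave at ``the technical heart.'' First, the logic of your second paragraph is invalid on its own: simplicity of the $k^{k-2}$ eigenvectors at each root of unity would only give the divisibility $(\lambda^k-1)^{k^{k-2}}\mid\phi$, and a lower bound together with ``the only other root is $0$'' and the total degree does not determine the exponents (any $\phi=\lambda^a\prod_{\lambda_0^k=1}(\lambda-\lambda_0)^{b_{\lambda_0}}$ with $b_{\lambda_0}\geq k^{k-2}$ is consistent). Exactness must come from the Poisson formula, and there the relevant points are not eigenvectors at a specialized $\lambda_0$: they are the common zeros of the $k-1$ dehomogenized forms $f_1,\dots,f_{k-1}$ over $\overline{\mathbb{C}(\lambda)}$ with $\lambda$ an indeterminate, at which $f_0(p)=\lambda-\lambda^{-(k-1)}\neq 0$, so they are not eigenvectors of anything; your projective-eigenvector count must be replaced by a count of these points (one finds $p_i=\zeta_i/\lambda$ with $\zeta_i^k=1$ and $\prod_i\zeta_i=1$, hence exactly $k^{k-2}$ of them --- numerically the same, but a different computation). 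Second, the transversality you defer is the actual content of the proof and must be checked; it does hold: at such a point $p$, $\partial f_i/\partial x_i=(k-1)\lambda p_i^{k-2}$ and $\partial f_i/\partial x_j=-\lambda p_i^{k-1}/p_j$ for $j\neq i$, so the Jacobian is $D\left(kD'-\mathbf{1}v^{\mathrm{T}}\right)$ with $D=\mathrm{diag}(\lambda p_i^{k-1})$, $D'=\mathrm{diag}(1/p_i)$, $v=(1/p_1,\dots,1/p_{k-1})^{\mathrm{T}}$, and the matrix determinant lemma gives $\det J=k^{k-2}\lambda^{k-1}\prod_i p_i^{k-2}\neq0$. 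Only once $m(p)=1$ is established at all $k^{k-2}$ nonzero points does the Poisson formula give $\phi=\lambda^{(k-1)^k+m(0)}\left(\lambda-\lambda^{1-k}\right)^{k^{k-2}}$, and only at that stage does your degree comparison legitimately force $m(0)=(k-1)^{k-1}-k^{k-2}$ and hence the stated formula. In short: the skeleton and the counts are right, but the two steps you leave undone are not routine bookkeeping --- they are the proof.
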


In this paper, the \emph{Poisson formula} of resultants is important to compute the characteristic polynomials of hypergraphs.

\begin{lem} (Poisson formula)\label{1}\cite{GtM,I.M.,J.P}
Let $F_0$,$F_1,\ldots,F_n$ be homogeneous polynomials of respective degrees $d_0,\ldots,,d_n$ in $K[x_0,\ldots,x_n]$, where $K$ is an algebraically closed field. For $0\leq i \leq n$, let $\overline {{F_i}}  = {\left. {{F_i}} \right|_{{x_0} = 0}}$ and ${f_i} = {\left. {{F_i}} \right|_{{x_0} = 1}}$.  Let $\mathcal{V}$ be the set of simultaneous zeros of the system of polynomials $f_1,f_2,\ldots,f_n$, that is, $\mathcal{V}$ is the affine variety defined by the polynomials. If ${\rm{Res}}{\left( {\begin{array}{*{20}{c}}
   {\overline {{F_1}} }  \\
    \vdots   \\
   {\overline {{F_n}} }  \\
\end{array}} \right)}\neq 0$, then $\mathcal{V}$ is a zero-dimensional variety (a finite set of points), and
\[{\rm{Res}}\left( {\begin{array}{*{20}{c}}
   {{F_0}}  \\
   {{F_1}}  \\
    \vdots   \\
   {{F_n}}  \\
\end{array}} \right) = {\rm{Res}}{\left( {\begin{array}{*{20}{c}}
   {\overline {{F_1}} }  \\
    \vdots   \\
   {\overline {{F_n}} }  \\
\end{array}} \right)^{{d_0}}}\prod\limits_{p \in V} {{{\left( {{f_0}(p)} \right)}^{m(p)}}} \]

\[{\rm{Res}}\left( \begin{array}{l}
 {F_0} \\
 {F_1} \\
  \vdots  \\
 {F_n} \\
 \end{array} \right) = {\rm{Res}}{\left( {\begin{array}{l}
   {\overline {{F_1}} }  \\
    \vdots   \\
   {\overline {{F_n}} }  \\
\end{array}} \right)^{{d_0}}}{\prod _{p \in {\cal V}}}\left( {{f_0}(p)} \right){^{m(p)}},\]
where $m(p)$ is the multiplicity of a point $p \in \mathcal{V}$.
\end{lem}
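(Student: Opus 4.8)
The plan is to derive the formula from the standard characterization of the multivariate resultant, treating $\mathrm{Res}(F_0,F_1,\ldots,F_n)$ as a polynomial in the coefficients of $F_0$ while holding $F_1,\ldots,F_n$ fixed. I would use the following classical properties: (i) $\mathrm{Res}$ is a polynomial in the coefficients of the $F_i$; (ii) $\mathrm{Res}(F_0,\ldots,F_n)=0$ if and only if the $F_i$ share a common zero in $\mathbb{P}^n$; (iii) as a polynomial in the coefficients of $F_0$ it is homogeneous of degree $d_1\cdots d_n$; (iv) it is multiplicative, $\mathrm{Res}(GH,F_1,\ldots,F_n)=\mathrm{Res}(G,F_1,\ldots,F_n)\,\mathrm{Res}(H,F_1,\ldots,F_n)$; and (v) the normalization $\mathrm{Res}(x_0^{d_0},\ldots,x_n^{d_n})=1$. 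The hypothesis $\mathrm{Res}(\overline{F_1},\ldots,\overline{F_n})\neq 0$ is the crucial genericity assumption: it guarantees that $F_1=\cdots=F_n=0$ has no common zero on the hyperplane at infinity $\{x_0=0\}$, so that B\'ezout's theorem yields exactly $d_1\cdots d_n$ common projective zeros counted with multiplicity, all lying in the chart $\{x_0\neq 0\}$ and hence corresponding bijectively to the points of $\mathcal{V}$.

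The heart of the argument is to show that, with $F_1,\ldots,F_n$ fixed as above,
\[
\mathrm{Res}(F_0,F_1,\ldots,F_n)=C\prod_{p\in\mathcal{V}} \bigl(f_0(p)\bigr)^{m(p)},
\]
where $C$ is a constant independent of $F_0$. I would argue this by noting that, as a polynomial in the coefficients of $F_0$, the left side vanishes precisely when $F_0$ passes through one of the common zeros $\zeta=[1:p_1:\cdots:p_n]$, by property (ii). Each condition $F_0(\zeta)=0$ is a single linear form in the coefficients of $F_0$, and for such an affine point one has $F_0(\zeta)=f_0(p)$; thus $\prod_{p}f_0(p)^{m(p)}$ is a polynomial of degree $\sum_p m(p)=d_1\cdots d_n$ in those coefficients with exactly the same zero locus, the exponents matching the local intersection multiplicities. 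Comparing degrees via property (iii) and invoking the irreducibility/divisibility structure of the resultant then forces the two polynomials to agree up to the scalar $C$.

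Finally I would pin down $C$ by specialising $F_0=x_0^{d_0}$. On one hand $f_0(p)=1$ for every $p\in\mathcal{V}$, so the product equals $1$ and the right side reduces to $C$. On the other hand, the reduction property $\mathrm{Res}(x_0^{d_0},F_1,\ldots,F_n)=\mathrm{Res}(\overline{F_1},\ldots,\overline{F_n})^{d_0}$—which follows from multiplicativity (iv) applied to $x_0^{d_0}=x_0\cdots x_0$ together with the base identity $\mathrm{Res}(x_0,F_1,\ldots,F_n)=\mathrm{Res}(\overline{F_1},\ldots,\overline{F_n})$ obtained by eliminating $x_0$ against the hyperplane $\{x_0=0\}$—gives $C=\mathrm{Res}(\overline{F_1},\ldots,\overline{F_n})^{d_0}$. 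Substituting back yields the claimed Poisson formula, and matching the degree $d_1\cdots d_n$ in the coefficients of $F_0$ on both sides provides a consistency check. I expect the main obstacle to be the second step: rigorously matching the multiplicities $m(p)$ in the product with the algebraic multiplicities built into the resultant, and justifying the passage from "same zero locus and same degree" to "equal up to a constant," which relies on the irreducibility of the resultant as a polynomial in the coefficients (or an equivalent divisibility argument) rather than on a naive reduction to the case of a fully split $F_0$.
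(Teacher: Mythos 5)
First, a framing remark: the paper does not prove this lemma at all --- it is quoted as a known result directly from the cited references (Cox--Little--O'Shea, Gelfand--Kapranov--Zelevinsky, Jouanolou), so the only meaningful benchmark is the standard proof in those sources. That proof identifies $\prod_{p\in\mathcal{V}}f_0(p)^{m(p)}$ with the determinant of the multiplication map $m_{f_0}\colon A\to A$ on the finite-dimensional algebra $A=K[x_1,\ldots,x_n]/\langle f_1,\ldots,f_n\rangle$ (finite-dimensionality being exactly what the hypothesis $\mathrm{Res}(\overline{F_1},\ldots,\overline{F_n})\neq 0$ guarantees), and then proves $\mathrm{Res}(F_0,\ldots,F_n)=\mathrm{Res}(\overline{F_1},\ldots,\overline{F_n})^{d_0}\det(m_{f_0})$. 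Your outline instead follows the other classical route, factoring the specialized resultant into linear forms; your endgame (pinning down $C$ by taking $F_0=x_0^{d_0}$, using multiplicativity and $\mathrm{Res}(x_0,F_1,\ldots,F_n)=\mathrm{Res}(\overline{F_1},\ldots,\overline{F_n})$) is sound.

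The genuine gap is exactly where you suspect it, and it is not a technicality that ``irreducibility/divisibility structure'' can absorb. Once $F_1,\ldots,F_n$ are specialized, $\mathrm{Res}(F_0,F_1,\ldots,F_n)$, viewed as a polynomial in the coefficients of $F_0$ alone, is no longer irreducible --- by your own analysis it is a product of linear forms --- so irreducibility of the generic resultant (which holds only as a polynomial in all coefficients jointly) cannot be invoked to compare it with $\prod_{p}f_0(p)^{m(p)}$. What the Nullstellensatz plus your property (ii) actually yields is $\mathrm{Res}=C\prod_{p}L_p^{a_p}$, where $L_p(F_0)=f_0(p)$ and $a_p\ge 1$ are some exponents; the degree count (iii) then gives only $\sum_p a_p=d_1\cdots d_n=\sum_p m(p)$, which forces $a_p=m(p)$ termwise only when all common zeros are simple. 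When some $m(p)>1$, the equality $a_p=m(p)$ is the entire content of the theorem and needs a separate argument: either the multiplication-map computation above, where $\det(m_{f_0})=\prod_p f_0(p)^{m(p)}$ follows from decomposing $A$ into its local rings at the points of $\mathcal{V}$, or a genericity-plus-specialization argument --- prove the formula for generic $F_1,\ldots,F_n$ (all zeros simple, where your degree count does close the argument), then specialize, using that both sides are polynomial in all coefficients and that a zero of multiplicity $m(p)$ splits into $m(p)$ simple zeros under perturbation (conservation of number). Without one of these ingredients the proof is incomplete precisely at its central step, as you yourself anticipated.
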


\begin{lem}\label{4} (Page 97 and 102 of [1])
Let $F_0$,$F_1,\ldots,F_n$ be homogeneous polynomials of respective degrees $d_0,\ldots,,d_n$ in $K[x_0,\ldots,x_n]$, where $K$ is an algebraically closed field. Then \\
(1) ${\rm{Res}}\left( {\begin{array}{*{20}{c}}
   {{F_0}}  \\
   {{F_1}}  \\
    \vdots   \\
   {\lambda {F_n}}  \\
\end{array}} \right) = {\lambda ^{{d_0}{d_1} \cdots {d_{n - 1}}}}{\rm{Res}}\left( {\begin{array}{*{20}{c}}
   {{F_0}}  \\
   {{F_1}}  \\
    \vdots   \\
   {{F_n}}  \\
\end{array}} \right)$; \\
(2) ${\rm{Res}}\left( {\begin{array}{*{20}{c}}
   {{F_0}}  \\
   {{F_1}}  \\
    \vdots   \\
   {{F_{n - 1}}}  \\
   {x_n^d}  \\
\end{array}} \right) = {\rm{Res}}{\left( {\begin{array}{*{20}{c}}
   {{{\left. {{F_0}} \right|}_{{x_n} = 0}}}  \\
   {{{\left. {{F_1}} \right|}_{{x_n} = 0}}}  \\
    \vdots   \\
   {{{\left. {{F_{n-1}}} \right|}_{{x_n} = 0}}}  \\
\end{array}} \right)^d}{\rm{ }}$.
\end{lem}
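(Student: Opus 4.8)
The plan is to obtain both identities from the standard structure theory of the multivariate resultant, using the Poisson formula (Lemma \ref{1}) as the main engine together with the multihomogeneity of $\mathrm{Res}$ as a supporting fact. For part (1), I would invoke the fundamental property that $\mathrm{Res}(F_0,\ldots,F_n)$ is a polynomial in the coefficients of each $F_i$ that is homogeneous of degree $\prod_{j\neq i}d_j$. Replacing $F_n$ by $\lambda F_n$ multiplies every coefficient of $F_n$ by $\lambda$; since the coefficients of $F_n$ enter the resultant with total degree $d_0d_1\cdots d_{n-1}$, the whole resultant is scaled by $\lambda^{d_0d_1\cdots d_{n-1}}$, which is exactly the claim. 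If one prefers not to quote multihomogeneity as a black box, the same statement follows by induction on $n$ from Lemma \ref{1}: applying the Poisson formula to $(F_0,\ldots,F_{n-1},\lambda F_n)$, the affine variety $\mathcal{V}$ of the dehomogenized system is unchanged because scaling the last equation by $\lambda\neq 0$ alters neither its zero set nor the multiplicities, so the product $\prod_{p\in\mathcal{V}}f_0(p)^{m(p)}$ is untouched and the entire scaling is produced by the boundary resultant $\mathrm{Res}(\overline{F_1},\ldots,\lambda\overline{F_n})$, to which the inductive hypothesis (base case: the Sylvester resultant) applies.

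For part (2), the idea is to feed the monomial $x_n^{d}$ into the Poisson formula as the \emph{distinguished} polynomial while dehomogenizing with respect to $x_n$. Writing the tuple as $(x_n^{d},F_0,F_1,\ldots,F_{n-1})$ with $x_n^{d}$ in the distinguished position, the dehomogenization $x_n=1$ turns $x_n^{d}$ into the constant $f_0\equiv 1$, while it turns the remaining forms into the affine system cutting out $\mathcal{V}$. Consequently the product $\prod_{p\in\mathcal{V}}f_0(p)^{m(p)}=1$ collapses entirely, and the boundary resultant $\mathrm{Res}(\overline{F_0},\ldots,\overline{F_{n-1}})$ with $\overline{F_i}=F_i|_{x_n=0}$ is raised to the power equal to the degree of the distinguished form, namely $d$. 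This yields $\mathrm{Res}(F_0,\ldots,F_{n-1},x_n^{d})=\mathrm{Res}(F_0|_{x_n=0},\ldots,F_{n-1}|_{x_n=0})^{d}$, after a reordering of the forms under which $\mathrm{Res}$ changes at most by a sign fixed to $+1$ by the normalization $\mathrm{Res}(x_0^{d_0},\ldots,x_n^{d_n})=1$.

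Applying Lemma \ref{1} with $x_n$ as the dehomogenizing variable (rather than the literal $x_0$ of the statement) is legitimate because $\mathrm{Res}$ is invariant under relabeling of the variables and symmetric, up to sign, under permutation of the forms; I would state this relabeling explicitly so that the distinguished variable and distinguished polynomial remain consistently indexed throughout the computation. The main obstacle is the nonvanishing hypothesis of the Poisson formula, which requires $\mathrm{Res}(\overline{F_1},\ldots,\overline{F_n})\neq 0$, i.e.\ in part (2) that $\mathrm{Res}(F_0|_{x_n=0},\ldots,F_{n-1}|_{x_n=0})\neq 0$, whereas the desired identity must hold unconditionally. I would resolve this by the standard specialization argument: both sides are polynomial functions of the coefficients of $F_0,\ldots,F_{n-1}$, they agree on the Zariski-dense open locus where the boundary resultant is nonzero, and hence they agree identically; the degenerate case where the boundary resultant vanishes is then forced, both sides being $0$. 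The same continuity argument simultaneously removes the nonvanishing restriction from the inductive version of part (1).
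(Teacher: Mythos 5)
Your proposal cannot be compared with the paper's own argument for the simple reason that the paper has none: Lemma \ref{4} is quoted from Cox, Little and O'Shea \cite{GtM} (the cited pages 97 and 102) and is used as a black box. Judged on its own merits, your derivation is correct, and it is essentially the standard one; its nice feature is that it makes part (2) a formal consequence of the Poisson formula (Lemma \ref{1}), which the paper has already stated, plus two basic properties of $\mathrm{Res}$ (multihomogeneity, and invariance up to sign under reordering forms and relabeling variables). Part (1) is immediate from multihomogeneity: $\mathrm{Res}$ is homogeneous of degree $\prod_{j\ne n}d_j=d_0d_1\cdots d_{n-1}$ in the coefficients of $F_n$, and there is no circularity in quoting this, since it is more elementary than the Poisson formula. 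In your inductive variant of (1), you should treat $\lambda$ as a nonzero scalar in the infinite field $K$ (or work over $K(\lambda)$) when asserting that the affine variety and its multiplicities are unchanged, and then conclude the identity as a polynomial identity in $\lambda$.

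Two points in part (2) need to be said more carefully, though both are reparable exactly as you indicate. First, the sign bookkeeping: you perform two operations, each of which can genuinely contribute $-1$ --- reordering the forms (already for binary forms $\mathrm{Res}(g,f)=(-1)^{(\deg f)(\deg g)}\mathrm{Res}(f,g)$) and relabeling the variables ($\mathrm{Res}$ acquires $\det(A)^{d_0\cdots d_n}$ under $x\mapsto Ax$, and a transposition has determinant $-1$). Your normalization argument does fix this: the composite sign is a constant depending only on the degree sequence, and evaluating at $(x_0^{d_0},\ldots,x_{n-1}^{d_{n-1}},x_n^{d})$ --- a point at which the Poisson formula legitimately applies, since the corresponding boundary resultant equals $1\neq 0$ --- forces that constant to be $+1$. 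Second, the Zariski-density step is indeed necessary, because the Poisson formula only applies when $\mathrm{Res}\bigl(F_0|_{x_n=0},\ldots,F_{n-1}|_{x_n=0}\bigr)\neq 0$; since both sides of (2) are polynomial functions of the coefficients agreeing on that dense open locus over an infinite field, they agree identically. With these two points made explicit, your proof is complete, and it has the added value of making the paper self-contained on a result it otherwise only cites.
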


For a $k$-uniform hypergraph $H$ with vertices set $V(H)=\{0,1,\ldots,n\}$, let
\begin{align} \notag
F^{H} &= \lambda x^{[k-1]}-\mathcal{A}_Hx^{k-1} =(F^{H}_0,F^{H}_1,\ldots,F^{H}_n)^{\mathrm{T}}, \\ \notag
\overline{F^H}&={{\left. ({F_1^H},{F_2^H},\ldots,{F_n^H})^\mathrm{T} \right|}_{{x_0} = 0}}=(\overline{F_1^H},\overline{F_2^H},\ldots,\overline{F_n^H})^\mathrm{T}, \\  \notag
f_i^H&={{\left. F_i^H \right|}_{{x_0} = 1}}, i=0,1,2,\ldots,n.
\end{align}
where $x=(x_0,x_1,\ldots,x_n)^\mathrm{T} \in \mathbb{C}^{n+1}$. Let $\mathcal{V}^H$ denote the affine variety defined by the polynomials $f_1^H,f_2^H,\ldots,f_n^H$. From Lemma \ref{2},
\begin{align}\label{8261}
\phi_{{H}}(\lambda)= \mathrm{Res}(F^{{H}})=\mathrm{Res}(\overline{F^{{H}}})^{k-1}\prod\limits_{p \in \mathcal{V}^H} {\left( {f_0^{H}(p)} \right)^{m_H(p)}},
\end{align}
where $m_H(p)$ is the multiplicity of a point $p \in \mathcal{V}^H$.

\section{Main results}

In this section, a reduced formula of the characteristic polynomials of $k$-uniform hypergraphs with a pendant edge and the explicit characteristic polynomial of $k$-uniform  hyperpath are given. These results generalize the results given by Harary et al. \cite{diedai2} and Lov\'{a}sz et al. \cite{Lovasz}.

Cooper and Dutle  gave the characteristic polynomial of  $P_1^{(k)}$ via  the trace of tensor \cite{cooper}. Let $V({P_1^{(k)}})=\{0,1,\ldots,k-1\}$. And $\mathcal{V}$ denotes the affine variety defined by the polynomials $f_1^{P_1^{(k)}},f_2^{P_1^{(k)}},\ldots, f_{k-1}^{P_1^{(k)}}$. In order to give the reduced formula of the characteristic polynomials of $k$-uniform hypergraphs with a pendant edge, we first give $m(p)$ for $p \in \mathcal{V}$.

\begin{thm}\label{3}
Let $V({P_1^{(k)}})=\{0,1,\ldots,k-1\}$. And $\mathcal{V}$ denotes the affine variety defined by the polynomials $f_1^{P_1^{(k)}},f_2^{P_1^{(k)}},\ldots, f_{k-1}^{P_1^{(k)}}$. Then $\sum\limits_{0 \ne p \in \mathcal{V} } {m(p)}  = {k^{k - 2}}$ and $m(0)={{{(k - 1)}^{k - 1}} - {k^{k - 2}}}$ for $0 \in \mathcal{V}$.
\end{thm}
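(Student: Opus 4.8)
The plan is to make the defining system of $\mathcal{V}$ completely explicit, fix the total intersection multiplicity by Bézout, count the nonzero points together with their multiplicities directly, and then recover $m(0)$ by subtraction. First I would write down the equations. Since $P_1^{(k)}$ consists of the single edge $\{0,1,\ldots,k-1\}$, for $i\ge 1$ we have $F_i^{P_1^{(k)}}=\lambda x_i^{k-1}-x_0\prod_{1\le j\le k-1,\,j\ne i}x_j$, so that
\[
f_i^{P_1^{(k)}}=\lambda x_i^{k-1}-\prod_{\substack{1\le j\le k-1\\ j\ne i}}x_j,\qquad i=1,\ldots,k-1 .
\]
Multiplying the $i$-th relation by $x_i$ shows that any point of $\mathcal{V}$ with all coordinates nonzero satisfies $\lambda x_i^{k}=P$ for every $i$, where $P=x_1\cdots x_{k-1}$; and the origin plainly lies in $\mathcal{V}$.

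Next I would pin down the total multiplicity. The leading forms are $\overline{F_i^{P_1^{(k)}}}=\lambda x_i^{k-1}$, which for $\lambda\ne 0$ have no common projective zero, so $\mathrm{Res}(\overline{F^{P_1^{(k)}}})=\lambda^{(k-1)^{k-1}}\ne 0$. By Lemma \ref{1} this forces $\mathcal{V}$ to be zero-dimensional, and since the leading forms have no zero at infinity, the affine Bézout theorem gives
\[
\sum_{p\in\mathcal{V}}m(p)=(k-1)^{k-1},
\]
the system consisting of $k-1$ polynomials each of degree $k-1$.

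It then remains to evaluate $\sum_{0\ne p\in\mathcal{V}}m(p)$. For $\lambda\ne 0$, a solution with some zero coordinate would make $\lambda x_i^k=P$ give $P=0$ and hence all coordinates zero; thus every $p\ne 0$ has all coordinates nonzero. Taking the product of $\lambda x_i^k=P$ over $i$ yields $\lambda^{k-1}P^{k}=P^{k-1}$, whence $P=\lambda^{1-k}$ and $x_i^k=\lambda^{-k}$, so $x_i=\lambda^{-1}\zeta_i$ with $\zeta_i^k=1$ and $\prod_i\zeta_i=1$. The tuples $(\zeta_1,\ldots,\zeta_{k-1})$ of $k$-th roots of unity with product $1$ number exactly $k^{k-2}$, giving $k^{k-2}$ distinct nonzero points of $\mathcal{V}$.

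Finally, to upgrade this count to $\sum_{0\ne p}m(p)=k^{k-2}$ I must show each nonzero point is simple, which I expect to be the main obstacle. Computing the Jacobian $J=(\partial f_i^{P_1^{(k)}}/\partial x_l)$ at a nonzero solution and using $\lambda x_i^k=P$ gives $J_{il}=(P/(x_ix_l))\,c_{il}$ with $c_{ii}=k-1$ and $c_{il}=-1$ $(i\ne l)$; that is, $J=P\,D\,C\,D$ with $D=\mathrm{diag}(x_i^{-1})$ and $C=kI-\mathbf 1\mathbf 1^{\mathrm T}$ of order $k-1$, where $\mathbf 1$ is the all-ones vector. Since $\det C=k^{k-2}\ne 0$ while $D$ and $P$ are invertible, $J$ is nonsingular, so every nonzero point contributes multiplicity one and $\sum_{0\ne p\in\mathcal{V}}m(p)=k^{k-2}$. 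Subtracting from the Bézout total gives $m(0)=(k-1)^{k-1}-k^{k-2}$. The counting of solutions is elementary; the genuine work is the Jacobian evaluation together with the identity $\det(kI-\mathbf 1\mathbf 1^{\mathrm T})=k^{k-2}$ that certifies simplicity.
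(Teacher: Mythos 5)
Your proposal is correct, but it proves the theorem by a genuinely different route than the paper. The paper never computes the multiplicities intrinsically: it classifies the points of $\mathcal{V}$ (exactly as you do, via $\lambda p_i^k=p^{e\setminus\{0\}}$, so that $p^{e\setminus\{0\}}$ is $0$ at the origin and $\lambda^{1-k}$ elsewhere), substitutes this into the Poisson formula of Lemma \ref{1} to get $\phi_{P_1^{(k)}}(\lambda)=\lambda^{(k-1)^k+m(0)-(k-1)S}(\lambda^k-1)^{S}$ with $S=\sum_{0\ne p\in\mathcal{V}}m(p)$, and then determines $S=k^{k-2}$ and $m(0)$ by matching exponents against the already-known characteristic polynomial of Lemma \ref{2} (Cooper--Dutle's trace computation). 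You instead evaluate the intersection numbers directly: affine B\'ezout (legitimate here since the top-degree forms $\lambda x_i^{k-1}$ have no common projective zero) gives the total $(k-1)^{k-1}$; the nonzero solutions are explicitly $x_i=\lambda^{-1}\zeta_i$ with $\zeta_i^k=1$ and $\prod_i\zeta_i=1$, hence exactly $k^{k-2}$ points; and the Jacobian factorization $J=P\,D\,(kI-\mathbf{1}\mathbf{1}^{\mathrm{T}})\,D$ with $\det(kI-\mathbf{1}\mathbf{1}^{\mathrm{T}})=k^{k-2}\ne 0$ shows each such point is simple, after which $m(0)$ follows by subtraction. What your route buys: it makes no use of Lemma \ref{2}, so it is logically independent of Cooper--Dutle's result---indeed, fed back into the paper's Poisson-formula computation it reproves Lemma \ref{2}---and it gives the finer fact that every nonzero point of $\mathcal{V}$ has multiplicity one, which the paper only obtains in aggregate. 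What it costs: two standard facts beyond Lemma \ref{1}, namely the affine B\'ezout theorem and the criterion that a nonsingular Jacobian forces multiplicity one; both are in the paper's own reference \cite{GtM}, so this is a citation burden, not a gap. Two small points you should write out explicitly: the converse check that your $k^{k-2}$ candidate points really lie in $\mathcal{V}$ (immediate, since at points with all coordinates nonzero $f_i=0$ is equivalent to $\lambda x_i^k=P$), and the remark that all of this takes place over the algebraically closed field $\overline{\mathbb{C}(\lambda)}$ with $\lambda$ an indeterminate, which is also what the paper implicitly assumes when it argues that $p^{e\setminus\{0\}}\ne 0$ forces $p^{e\setminus\{0\}}=\lambda^{1-k}$.
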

\begin{proof}
For the hypergraph $H=P_1^{(k)}$ with one edge $e=\{0,1,\ldots,k-1\}$, $F_i^H=\lambda x_i^{k - 1} - x^{e\setminus \{i\}}$ for $i=0,1,\ldots,k-1$, where $x=(x_0,x_1,\ldots,x_{k-1} )^\mathrm{T}\in \mathbb{C}^k$.  From Eq. (\ref{8261}),
\begin{align}\label{shizi}
{\phi _H}(\lambda )= \mathrm{Res}(F^{H})=\mathrm{Res}(\overline{F^H})^{k-1}\prod\limits_{p \in \mathcal{V}} {\left( {f_0^H(p)} \right)^{m(p)}}.
\end{align}

Since $\overline{F_i^H}={{\left. F_i^H \right|}_{{x_0} = 0}}={{\left. (\lambda x_i^{k - 1} - x^{e\setminus \{i\}} )\right|}_{{x_0} = 0}}=\lambda x_i^{k - 1} $ for $i \in [k-1]$, $\mathrm{Res}( \overline{F^H} )$ is the characteristic polynomial of the $k$-order $(k-1)$-dimension null tensor. From the definition of tensor eigenvalues, we know that the eigenvalues of the null tensor are zero. And from ${\rm{Res}}( \overline{F^H } )$ is monic polynomials of degree ${{{(k - 1)}^{k - 1}}}$, we get
\begin{align} \label{xinde2}
{\rm{Res}}( \overline{F^H } ) = {\lambda ^{{{(k - 1)}^{k - 1}}}}.
\end{align}

For $p = {({{p_1}},{{p_2}}, \ldots ,{{p_{k - 1}}})^\mathrm{T}} \in \mathcal{V}$.  When $p=0$, we have $p^{e\setminus\{0\}}=p_1p_2\cdots p_{k-1}=0$.

When $p\neq0$, we get $f^H_i(p)=\lambda p_i^{k - 1} - p^{e\setminus \{0,i\}}=0$ for all $i\in[k-1]$. Then
\begin{align}\label{lao}
\lambda{p_1^k} = \lambda{p_2^k} =  \cdots  = \lambda{p^k_{k - 1}}= p^{e\setminus\{0\}}
\end{align}
and $\lambda^{k-1}p_1^kp_1^k\cdots p_{k-1}^k=\lambda^{k-1}(p^{e\setminus\{0\}})^k=(p^{e\setminus\{0\}})^{k-1}$. Note that $\lambda$ is an indeterminant of the characteristic polynomials $\phi_H(\lambda)$.  From Eq.(\ref{lao}), we know that  $p^{e\setminus\{0\}}\neq 0$. Then $ p^{e\setminus\{0\}}= \frac{1}{{{\lambda ^{k-1}}}} $. From the above discussion, we obtain
\begin{align}\label{shizi4}
p^{e\setminus\{0\}} = \left\{ \begin{array}{l}
 0,{\kern 50pt} p = 0, \\
 \frac{1}{{{\lambda ^{k-1}}}},{\kern 35pt} p\neq0 .\\
 \end{array} \right.
\end{align}
Hence,
\[{f_0^H}\left( p \right) = \lambda  - p^{e\setminus\{0\}} = \left\{ \begin{array}{l}
 \lambda, {\kern 1pt} {\kern 1pt} {\kern 69pt} p = 0 ,\\
 \lambda  - \frac{1}{{{\lambda ^{k - 1}}}},{\kern 38pt}  p \ne 0 .\\
 \end{array} \right. \]
By Eq. (\ref{shizi}) and Eq. (\ref{xinde2}), we have
\begin{align}\notag \label{91}
{\phi _H}\left( \lambda  \right) &= {\lambda ^{{{(k - 1)}^k}}}\prod\limits_{p \in \mathcal{V}} {\left( {f_0^H(p)} \right)^{m(p)}} \\ \notag
 &= {\lambda ^{{{(k - 1)}^k}}}\prod\limits_{0= p \in \mathcal{V}} {{\lambda ^{m(p)}}} \prod\limits_{0\ne p \in \mathcal{V}} {{{(\lambda  - \frac{1}{{{\lambda ^{k - 1}}}})}^{m(p)}}} \\\notag
&= {\lambda ^{{{(k - 1)}^k}}}{\lambda ^{m(0)}}{(\lambda  - \frac{1}{{{\lambda ^{k - 1}}}})^{\sum\limits_{0\ne p \in \mathcal{V}} {m(p)} }}\\
&={\lambda ^{{{(k - 1)}^k} + m(0) - (k - 1)\sum\limits_{0\ne p \in \mathcal{V}} {m(p)} }}{({\lambda ^k} - 1)^{\sum\limits_{0\ne p \in \mathcal{V}} {m(p)} }}.
\end{align}

Comparing Lemma \ref{2} with Eq.(6), we obtain $\sum\limits_{0\ne p \in \mathcal{V}} {m(p)}  = {k^{k - 2}}$ and $m(0)={{{(k - 1)}^{k - 1}} - {k^{k - 2}}}$.
\end{proof}

Let $H$ be a $k$-uniform hypergraph with vertices set  $V(H)=\{0,1,2,\ldots,n\}$. From Eq.(\ref{8261}), we have
\begin{align}\label{9082}
\phi _{H}(\lambda)={{\rm{Res}}\left( \begin{array}{l}
 \overline{F_{\rm{1}}^H }\\
 \overline{F_2^H} \\
  \vdots  \\
 \overline{F_n^H} \\
 \end{array} \right) ^{k-1}\prod\limits_{p \in \mathcal{V}^H} {\left( {f_0^H(p)} \right)^{m_H(p)}}}.
 \end{align}
For $v \in V(H)$, $H-v$ denotes the $k$-uniform hypergraph obtained from $H$ by removing vertex $v$ and all edges incident to vertex $v$. Without loss of generality, let the vertex $v=0$. Since
\begin{align} \notag
\overline{F^{H}_i}&={{\left.(\lambda x_i^{k - 1} -  \sum\limits_{ e \in E_i(H)} {{x^{e \setminus \{i\} }}}) \right|}_{{x_0} = 0}} \\ \notag
&=\lambda x_i^{k - 1} -  \sum\limits_{\scriptstyle e \in E_i(H) \hfill \atop
  \scriptstyle ~~ 0 \notin e \hfill} {{x^{e \setminus \{i\} }}} \\ \notag
&=\lambda x_i^{k - 1} -  \sum\limits_{ e \in E_i(H-0)} {{x^{e \setminus \{i\} }}}
\end{align}
for $x=(x_0,x_1, \ldots,x_n)^\mathrm{T} \in \mathbb{C}^{n+1}$ and $i \in V(H-0)$, we have ${\rm{Res}}\left( \begin{array}{l}
 \overline{F_{{1}}^H} \\
 \overline{F_2^H }\\
  \vdots  \\
 \overline{F_n^H} \\
 \end{array} \right) = {\phi _{H - 0}}(\lambda )$. Then from Eq.(\ref{9082}), we know
\begin{align} \notag
\phi _{H}(\lambda)&={\phi_{H-0} ^{k-1}\prod\limits_{p \in \mathcal{V}^H} {\left( {f_0^H(p)} \right)^{m_H(p)}}} \\  \notag
&=\phi_{H-0}^{k-1}(\lambda){\prod _{p \in {\cal V}^H}}{\left( \lambda-  \sum\limits_{ e \in E_0(H)} {{p^{e \setminus \{0\} }}} \right)^{m_H(p)}}.
\end{align}

 So for $v \in V(H)$,
\begin{align}\label{908}
\phi _{H}(\lambda) =\phi_{H-v}^{k-1}(\lambda){\prod _{p \in {\cal V}^H}}{\left( \lambda-  \sum\limits_{ e \in E_v(H)} {{p^{e \setminus \{v\} }}} \right)^{m_H(p)}},
\end{align}
where ${\cal V}^H$ is affine variety defined by polynomials ${\left. {{F^H_i}} \right|_{{x_v} = 1}}$ for all $i \in V(H-v)$.

Let
\begin{align}\label{shizi5}
M_H(\lambda,\frac{t}{{{\lambda^{k - 1}}}})= {\prod _{p \in {\cal V}^H}}{\left( \lambda-  \sum\limits_{ e \in E_v(H)} {{p^{e \setminus \{v\} }}} -\frac{t}{{{\lambda^{k - 1}}}} \right)^{m_H(p)}},
\end{align}
where $t$ is a nonnegative integer. And $M_H(\lambda,\frac{t}{{{\lambda^{k-1}}}})$ is called  the ``\emph{$\frac{t}{{{\lambda^{k - 1}}}}$-translational fraction}" of $M_H(\lambda,0)$. By Eq.(\ref{908}) and Eq.(\ref{shizi5}), we know $M_H(\lambda,0)=\frac{{{\phi _H}(\lambda )}}{{\phi _{H - v}^{k - 1}(\lambda )}}$.

We give the reduced formula of the characteristic polynomial of $k$-uniform hypergraphs with pendent edges as follows. When $k=2$, this result is the Theorem \ref{6}.
\begin{thm}\label{5}
Let $H$ be a $k$-uniform hypergraph with $n$ vertices. $H_v$ denotes the $k$-uniform hypergraph obtained from $H$ by adding a pendent edge at the vertex $v$. Let $H-v$ be the $k$-uniform hypergraph obtained from $H$ by removing $v$ and all edges incident to $v$. Then
\begin{align}\notag
{\phi _{{H_v}}}(\lambda ) = {\lambda ^{{{(k - 1)}^{n + k - 1}}}}{\phi _{{H-v}}}{(\lambda)^{{{(k - 1)}^{k}}}}M_H{(\lambda,0)^{{{(k - 1)}^{k - 1}} - {k^{k - 2}}}}M_H(\lambda,\frac{1}{{{\lambda^{k-1}}}}){^{{k^{k - 2}}}},
\end{align}
where $M_H(\lambda ,0) = \frac{{{\phi _H}(\lambda )}}{{\phi _{H - v}(\lambda )^{{{k - 1}}}}}$, $M_H(\lambda,\frac{1}{{{\lambda^{k-1}}}})$ is the $\frac{1}{{{\lambda^{k - 1}}}}$-translational fraction of $M_H(\lambda,0)$.
\end{thm}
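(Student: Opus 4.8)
The plan is to apply the Poisson formula (Lemma \ref{1}) to $H_v$ in exactly the way it was applied to $H$ to obtain Eq.~(\ref{908}), but now distinguishing the attachment vertex $v$. Write the new pendant edge as $e_0=\{v,w_1,\dots,w_{k-1}\}$, where $w_1,\dots,w_{k-1}$ are the $k-1$ added vertices, so that $H_v$ has $n+k-1$ vertices. Taking $x_v$ as the distinguished variable, the Poisson formula gives $\phi_{H_v}(\lambda)=\mathrm{Res}(\overline{F^{H_v}})^{k-1}\prod_{p\in\mathcal V^{H_v}}\bigl(f_v^{H_v}(p)\bigr)^{m_{H_v}(p)}$, and the proof reduces to identifying the two factors on the right with the claimed expressions.

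First I would compute the leading factor $\mathrm{Res}(\overline{F^{H_v}})^{k-1}=\phi_{H_v-v}^{\,k-1}$. Removing $v$ deletes $e_0$, leaving $H-v$ together with the $k-1$ now-isolated vertices $w_1,\dots,w_{k-1}$. Correspondingly the barred system splits: for $i\in V(H-v)$ one has $\overline{F_i^{H_v}}=\overline{F_i^{H}}$ (the edge $e_0$ is not incident to $i$), while each pendant vertex contributes $\overline{F_{w_j}^{H_v}}=\lambda x_{w_j}^{k-1}$, since the monomial $x^{e_0\setminus\{w_j\}}$ carries the factor $x_v$ and vanishes at $x_v=0$. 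Using Lemma \ref{4}(1) to pull the $k-1$ scalars $\lambda$ out of the pendant rows produces a factor $\lambda^{(k-1)^{n+k-2}}$, and then $k-1$ successive applications of Lemma \ref{4}(2) eliminate the variables $x_{w_j}$ — each raising the resultant to the power $k-1$ — leaving $\mathrm{Res}(\overline{F^H})=\phi_{H-v}$ raised to the power $(k-1)^{k-1}$. Hence $\phi_{H_v-v}=\lambda^{(k-1)^{n+k-2}}\phi_{H-v}^{(k-1)^{k-1}}$, and raising to the $(k-1)$-st power gives the prefactor $\lambda^{(k-1)^{n+k-1}}\phi_{H-v}^{(k-1)^{k}}$.

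Next I would analyse the variety $\mathcal V^{H_v}$ and the polynomials $f_v^{H_v}$. Because $e_0$ meets the rest of $H_v$ only at $v$, and $x_v$ is set to $1$, the affine system $\{f_i^{H_v}:i\in V(H_v-v)\}$ decouples into the old system defining $\mathcal V^H$ (in the variables indexed by $V(H-v)$) and the pendant system $\lambda x_{w_j}^{k-1}=\prod_{\ell\ne j}x_{w_\ell}$, $j=1,\dots,k-1$, which is precisely the system of Theorem \ref{3} with $\mathcal V$ its solution set. Thus $\mathcal V^{H_v}=\mathcal V^H\times\mathcal V$ and the intersection multiplicities factor as $m_{H_v}(q,r)=m_H(q)\,m(r)$. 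For such a point, $f_v^{H_v}(q,r)=\lambda-\sum_{e\in E_v(H)}q^{e\setminus\{v\}}-r^{e_0\setminus\{v\}}$, where by Eq.~(\ref{shizi4}) of Theorem \ref{3} the last term equals $0$ if $r=0$ and $\tfrac1{\lambda^{k-1}}$ if $r\ne0$. Splitting the product over $\mathcal V^{H_v}$ according to whether $r=0$ and inserting the counts $m(0)=(k-1)^{k-1}-k^{k-2}$ and $\sum_{0\ne r}m(r)=k^{k-2}$ from Theorem \ref{3}, the product collapses, by the definition of the translational fractions in Eq.~(\ref{shizi5}), into $M_H(\lambda,0)^{(k-1)^{k-1}-k^{k-2}}M_H(\lambda,\tfrac1{\lambda^{k-1}})^{k^{k-2}}$. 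Multiplying this by the prefactor from the previous step yields the stated identity.

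The step I expect to be the main obstacle is justifying the product structure $\mathcal V^{H_v}=\mathcal V^H\times\mathcal V$ together with the multiplicativity $m_{H_v}(q,r)=m_H(q)\,m(r)$ of the intersection multiplicities for a product of zero-dimensional schemes; this is what rigorously turns the decoupling of the polynomial system into the clean splitting of the exponents. Along the way one must also check the nondegeneracy hypothesis $\mathrm{Res}(\overline{F^{H_v}})\ne0$ required by Lemma \ref{1}, which holds because the leading factor computed above is a nonzero polynomial in $\lambda$, so that the final equality is valid as an identity of polynomials in $\lambda$.
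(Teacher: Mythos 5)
Your proposal is correct and takes essentially the same approach as the paper's own proof: the Poisson formula applied at the attachment vertex, Lemma \ref{4} to reduce the barred resultant to the prefactor $\lambda^{(k-1)^{n+k-1}}\phi_{H-v}(\lambda)^{(k-1)^{k}}$, and the decoupling $\mathcal{V}^{H_v}=\mathcal{V}^{H}\times\mathcal{V}^{e_0}$ together with the multiplicity counts of Theorem \ref{3} to collapse the remaining product into $M_H(\lambda,0)^{(k-1)^{k-1}-k^{k-2}}M_H(\lambda,\frac{1}{\lambda^{k-1}})^{k^{k-2}}$. The only differences are cosmetic (you pull out all the $\lambda$ factors before eliminating the pendant variables, while the paper alternates the two parts of Lemma \ref{4}), and the product/multiplicativity structure you flag as the main obstacle is asserted in the paper without further justification as well.
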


\begin{proof}
Without loss of generality, let the vertex $v=0$. Let $V(H)=\{0,1,\ldots,n-1\}$ and $V(H_0)=\{0,1,\ldots,n, n+1,\ldots,n+k-2\}$. Then the pendent edge of $H_0$ is $e_0=\{0,n,n+1,\ldots,n+k-2\}$. Let $\phi_1=\mathrm{Res}\left( {\begin{array}{*{20}{c}}
   {\overline{{F^H}}}  \\
   {\overline{{F^{{e_0}}}}}  \\
\end{array}} \right)$ and \\ $\phi_2={\prod _{p \in {\cal V}^{H_0}}}{(\lambda  - \sum\limits_{ e \in E_0(H)} {{p^{e \setminus \{ 0\} }}}  - {p^{{e_0} \setminus \{ 0\} }})^{m_{H_0}(p)}}$. It follows from Eq.(\ref{8261}) that
\begin{align} \label{xinde}\notag
 {\phi _{{H_{\rm{0}}}}}(\lambda ) &= \mathrm{Res}(\overline{{F^{{H_0}}}})^{k-1}{\prod _{p \in {\cal V}^{H_0}}}{\left( {f_0^{{H_0}}(p)} \right)^{{m_{{H_0}}}(p)}} \\ \notag
 &= \mathrm{Res}(\overline{{F^{{H_0}}}})^{k-1}{\prod _{p \in {\cal V}^{H_0}}}{( \lambda- \sum\limits_{ e \in E_0(H_0)} {{p^{e \setminus \{ 0\} }}} )^{{m_{{H_0}}}(p)}} \\
  &= \mathrm{Res}\left( {\begin{array}{*{20}{c}}
   {\overline{{F^H}}}  \\
   {\overline{{F^{{e_0}}}}}   \notag
\end{array}} \right)^{k-1}{\prod _{p \in {\cal V}^{H_0}}}{(\lambda  - \sum\limits_{ e \in E_0(H)} {{p^{e \setminus \{ 0\} }}}  - {p^{{e_0} \setminus \{ 0\} }})^{m_{H_0}(p)}} \\
&=\phi^{k-1}_1\phi_2 .
 \end{align}

Since $\overline{F_i^{e_0}}={{\left. {F_i^{H_0}} \right|}_{{x_0} = 0}}=\lambda x_i^{k - 1}$ for $ i \in e_0 \setminus \{0\}$, from Lemma \ref{4} (1), we have
\begin{align} \notag
\phi_1=\mathrm{Res}\left( {\begin{array}{*{20}{c}}
   {\overline{{F^H}}}  \\
   {\overline{{F^{{e_0}}}}}  \\
\end{array}} \right)&= {\rm{Res}}\left( {\begin{array}{*{20}{c}}
   {\overline {{F^H}} }  \\
   {\lambda x_n^{k - 1}}  \\
    \vdots   \\
    {\lambda x_{n + k - 3}^{k - 1}}\\
   {\lambda x_{n + k - 2}^{k - 1}}  \\
\end{array}} \right) = {\lambda ^{{{(k - 1)}^{n+k-3}}}}{\rm{Res}}\left( {\begin{array}{*{20}{c}}
   {\overline {{F^H}} }  \\
   {\lambda x_n^{k - 1}}  \\
    \vdots   \\
    {\lambda x_{n + k - 3}^{k - 1}}\\
   {x_{n + k - 2}^{k - 1}}  \\
\end{array}} \right).
\end{align}
Since ${{{\left. {\overline{F^H}} \right|}_{{x_{n+k-2}} = 0}}}=\overline{F^H}$, it follows from that Lemma \ref{4} (2)
that ${\rm{Res}}\left( {\begin{array}{*{20}{c}}
   {\overline {{F^H}} }  \\
   {\lambda x_n^{k - 1}}  \\
    \vdots   \\
    {\lambda x_{n + k - 3}^{k - 1}}\\
   {x_{n + k - 2}^{k - 1}}  \\
\end{array}} \right)= {\rm{Res}}\left( {\begin{array}{*{20}{c}}
   {\overline {{F^H}} }  \\
   {\lambda x_n^{k - 1}}  \\
    \vdots   \\
    {\lambda x_{n + k - 3}^{k - 1}}\\
\end{array}} \right)^{k-1} $.
Then $\phi_1={\lambda ^{{{(k - 1)}^{n+k-3}}}}{\rm{Res}}\left( {\begin{array}{*{20}{c}}
   {\overline {{F^H}} }  \\
   {\lambda x_n^{k - 1}}  \\
    \vdots   \\
    {\lambda x_{n + k - 3}^{k - 1}}\\
\end{array}} \right)^{k-1} .$
By repeating the above process, we obtain $\phi_1={\lambda ^{{{(k - 1)}^{n+k-2}}}}{\rm{Res}}({\overline {{F^H}} })^{(k-1)^{k-1}} $.

From $\overline{F^H_i}={{\left. F_i^{H_0} \right|}_{{x_0} = 0}}=\lambda x_i^{k - 1} -  \sum\limits_{\scriptstyle  e \in E_i(H) \hfill \atop
  \scriptstyle ~~ 0 \notin e \hfill} {{x^{e \setminus \{i\} }}}=\lambda x_i^{k - 1} -  \sum\limits_{ e \in E_i(H-0)} {{x^{e \setminus \{i\} }}}$ for $i \in V(H-0)$,  it yields that $\mathrm{Res}(\overline{F^H})=\phi_{H-0}(\lambda)$. We obtain
\begin{align}
\phi_1={\rm{Res}}\left( {\begin{array}{*{20}{c}}
   {\overline {{F^H}} }  \\
   {\overline {{F^{{e_0}}}} }  \\
\end{array}} \right) = {\lambda ^{{{(k - 1)}^{n + k - 2}}}}{\phi _{H - 0}}{(\lambda )^{{{(k - 1)}^{k - 1}}}}. \notag
\end{align}

Note that ${\cal V}^{H_0}={\cal V}^{H}\times {\cal V}^{e_0}$.  For $p \in {\cal V}^{H_0}$, we have vector $p = \left( {\begin{array}{*{20}{c}}
   q  \\
   r  \\
\end{array}} \right)
$, where $q \in {\cal V}^{H}$, $r \in {\cal V}^{e_0} $. Let $r =(r_n,r_{n+1},\ldots,r_{n+k-2})^\mathrm{T}\in{\cal V}^{e_0}$. From Eq. (\ref{shizi4}), we know that
\begin{align}\notag
r^{e_0\setminus\{0\}}={r_n} \cdots {r_{n + k - 2}} = \left\{ \begin{array}{l}
 0,{\kern 50pt}r = 0, \\
 \frac{1}{{{\lambda ^{k-1}}}},{\kern 35pt} r \neq0 .\\
 \end{array} \right.
\end{align}
By Lemma \ref{3}, we have $m_{e_0}(0)={{{(k - 1)}^{k - 1}} - {k^{k - 2}}}$ for $0 \in \mathcal{V}^{e_0}$ and $\sum\limits_{0\ne r \in \mathcal{V}^{e_0} } {m_{e_0}(r)}  = {k^{k - 2}}$. Hence
\begin{align} \notag
 &\phi_2= \prod\limits_{p \in \mathcal{V}^{H_0}} {{{(\lambda  - \sum\limits_{ e \in E_0(H)} {{p^{e\backslash \{ 0\} }}}  - {p^{e_0\backslash \{ 0\} }})}^{m_{H_0}(p)}}}
 = \prod\limits_{\scriptstyle q \in {\cal V}^{H} \hfill \atop
  \scriptstyle r \in {\cal V}^{e_0} \hfill} {{{(\lambda  - \sum\limits_{ e\in E_0(H)} {{q^{e\backslash \{ 0\} }}}  - {r^{e_0\backslash \{ 0\} }})}^{{m_{H}}(q){m_{e_0}}(r)}}}  \\ \notag
  &=\prod\limits_{\scriptstyle q \in {\cal V}^{H} \hfill \atop
  \scriptstyle  0= r \in \mathcal{V}^{e_0} \hfill} {{{(\lambda \!-\!\sum\limits_{e \in E_0(H)} \! {{q^{e\backslash \{ 0\} }}}   \!- \! {r^{e_0\backslash \{ 0\} }})}^{{m_{H}}(q){m_{e_0}}(r)}}} \prod\limits_{\scriptstyle q \in {\cal V}^{H} \hfill \atop
  \scriptstyle 0 \neq r \in \mathcal{V}^{e_0} \hfill} {{{(\lambda  \! - \! \sum\limits_{e \in E_0(H)} {{q^{e\backslash \{ 0\} }}}  \! - \! {r^{e_0\backslash \{ 0\} }})}^{{m_{H}}(q){m_{e_0}}(r)}}} \\ \notag
 & = \prod\limits_{ q \in {\cal V}^{H} } {{{(\lambda  - \sum\limits_{e  \!\in \! E_0(H)} {{q^{e\backslash \{ 0\} }}} )}^{{m_{H}}(q)({{(k - 1)}^{k - 1}} - {k^{k - 2}})}}}  \prod\limits_{ q \in {\cal V}^{H}} {{{(\lambda  - \sum\limits_{e \in E_0(H)} {{q^{e\backslash \{ 0\} }}}  - \frac{1}{{{\lambda ^{k - 1}}}})}^{{m_{H}}(q) {k^{k - 2}}}}} .\notag
\end{align}
By Eq. (\ref{shizi5}), we have
\[M_H(\lambda,0)= \frac{{{\phi _H}(\lambda )}}{{{\phi _{H - 0}}{{(\lambda )}^{k - 1}}}}=\prod\limits_{ q \in {\cal V}^{H} } {{{(\lambda  - \sum\limits_{ e \in E_0(H)} {{q^{e\backslash \{ 0\} }}} )}^{{m_{H}}(q)}}}\]
 and \[ M_H(\lambda,\frac{1}{{{\lambda^{k-1}}}})=\prod\limits_{q \in {\cal V}^{H}}{{{(\lambda  - \sum\limits_{ e \in E_0(H) } {{q^{e\backslash \{ 0\} }}}  - \frac{1}{{{\lambda ^{k - 1}}}})}^{{m_{H}}(q)}}} .\]
Then
\[\phi_2=\prod\limits_{p \in \mathcal{V}^{H_0}} {{{(\lambda  - \sum\limits_{e \in E_0(H)} {{p^{e\backslash \{ 0\} }}}  - {p^{e_0\backslash \{ 0\} }})}^{m_{H_0}(p)}}}=M_H{(\lambda,0)^{{{(k - 1)}^{k - 1}} - {k^{k - 2}}}}{M_H(\lambda,\frac{1}{{{\lambda^{k-1}}}})^{{k^{k - 2}}}}.\]
Substituting $\phi_1$ and $\phi_2$ into Eq.(\ref{xinde}), the proof is completed.
\end{proof}

In Theorem \ref{5}, when $k=2$, $H$ is a graph with $n$ vertices. ${\phi _H}(\lambda )$ and ${{\phi _{H - v}}(\lambda )}$ are polynomials of degree $n$ and $n-1$, respectively. It follows from Eq. (\ref{shizi5}) that
\[M_H(\lambda,0) = \frac{{{\phi _H}(\lambda )}}{{{\phi _{H - v}}(\lambda )}} = \lambda  + \frac{{{\phi _H}(\lambda ) - \lambda {\phi _{H - v}}(\lambda )}}{{{\phi _{H - v}}(\lambda )}},\]
\begin{align} \notag
 M_H(\lambda,\frac{1}{{{\lambda}}}) &= \lambda  + \frac{{{\phi _H}(\lambda ) - \lambda {\phi _{H - v}}(\lambda )}}{{{\phi _{H - v}}(\lambda )}} - \frac{1}{\lambda } = \frac{{{\phi _H}(\lambda )}}{{{\phi _{H - v}}(\lambda )}} - \frac{1}{\lambda } .\notag
\end{align}
Then \begin{align} \notag
{\phi _{{H_v}}}(\lambda ) &= \lambda {\phi _{H - v}}(\lambda )\left( {\frac{{{\phi _H}(\lambda )}}{{{\phi _{H - v}}(\lambda )}} - \frac{1}{\lambda }} \right)\\ \notag
&= \lambda {\phi _H}(\lambda ) - {\phi _{H - v}}(\lambda ).
\end{align}Hence, the Theorem \ref{5} is the Theorem \ref{6} when $k=2$.

Let $H^{s}_v$ denote the $k$-uniform hypergraph obtained from  hypergraph $H$ by adding $s$ pendent edges at the vertex $v$. By Theorem \ref{5}, we  give a reduced formula of the characteristic polynomial of $H^{s}_v$.

\begin{thm}
Let $H$ be a $k$-uniform hypergraph with $n$ vertices. $H^s_v$ denotes the $k$-uniform hypergraph obtained from $H$ by adding $s$ pendent edges at the vertex $v$. Let $H-v$ be the $k$-uniform hypergraph obtained from $H$ by removing $v$ together with all edges incident to $v$. Then
\begin{align}\notag
{\phi _{H_v^s}}(\lambda ) = {\lambda ^{s{{(k - 1)}^{n + s(k - 1)}}}}{\phi _{H - v}}{(\lambda )^{{{(k - 1)}^{s(k - 1) + 1}}}}\prod\limits_{t = 0}^s {{{\left( {M_H(\lambda ,\frac{t}{{{\lambda ^{k - 1}}}})} \right)}^{\mathrm{C}_s^t{{K_1 }^{s - t}}{{K_2}^t}}}} ,
\end{align}
where $M_H(\lambda,\frac{t}{{{\lambda^{k-1}}}})$ is $\frac{t}{{{\lambda^{k - 1}}}}$-translational fraction of $M_H(\lambda,0) = \frac{{{\phi _H}(\lambda )}}{{\phi _{H - v}(\lambda )^{{{k - 1}}}}}$, $K_1 = {(k - 1)^{k - 1}} - {k^{k - 2}}$, $K_2 = {k^{k - 2}}$ and $\mathrm{C}_s^t$ is a combinatorial number.
\end{thm}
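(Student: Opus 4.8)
The plan is to follow the proof of Theorem~\ref{5} almost verbatim, but to apply the Poisson formula (Eq.~\eqref{8261}) directly to $H_v^s$ instead of iterating the single-edge result. Taking $v=0$, I would write $\phi_{H_v^s}(\lambda)=\mathrm{Res}(\overline{F^{H_v^s}})^{k-1}\prod_{p\in\mathcal{V}^{H_v^s}}\bigl(f_0^{H_v^s}(p)\bigr)^{m(p)}$ and handle the two factors separately. The structural fact I would establish first is that, since the $s$ pendent edges $e_1,\ldots,e_s$ pairwise meet only at $v$ and attach to $H$ only at $v$, the defining system for $\mathcal{V}^{H_v^s}$ decouples; hence $\mathcal{V}^{H_v^s}=\mathcal{V}^H\times\mathcal{V}^{e_1}\times\cdots\times\mathcal{V}^{e_s}$, the multiplicity of $p=(q,r_1,\ldots,r_s)$ factors as $m(p)=m_H(q)\prod_{l=1}^s m_{e_l}(r_l)$, and each $\mathcal{V}^{e_l}$ is a copy of the single-edge variety $\mathcal{V}$ of Theorem~\ref{3}.

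For the resultant factor I would compute $\mathrm{Res}(\overline{F^{H_v^s}})$ exactly as in Theorem~\ref{5}. Setting $x_0=0$ turns every pendent-vertex polynomial into $\lambda x_j^{k-1}$, so these $s(k-1)$ monomial equations can be stripped off one at a time by Lemma~\ref{4}(1) and (2); because a pendent variable occurs in no other equation, each elimination leaves the remaining system untouched. Writing $R_j$ for the resultant with $j$ pendent variables still present (so $R_0=\mathrm{Res}(\overline{F^H})=\phi_{H-0}(\lambda)$ and the total system has $(n-1)+j$ polynomials), each step gives $R_j=\lambda^{(k-1)^{\,n-2+j}}R_{j-1}^{\,k-1}$. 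Telescoping over $j=1,\ldots,s(k-1)$ makes every $\lambda$-exponent equal to $(k-1)^{\,n-2+s(k-1)}$, so their total is the geometric-type sum $s(k-1)^{\,n-1+s(k-1)}$, and raising to the $(k-1)$ power yields $\mathrm{Res}(\overline{F^{H_v^s}})^{k-1}=\lambda^{\,s(k-1)^{\,n+s(k-1)}}\phi_{H-v}(\lambda)^{(k-1)^{\,s(k-1)+1}}$, which is precisely the prefactor claimed.

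The product is where the binomial structure appears, and this is the step I expect to require the most care. For $p=(q,r_1,\ldots,r_s)$ the edges at $v$ are $E_v(H)$ together with the pendent edges, and Eq.~\eqref{shizi4} forces $r_l^{e_l\setminus\{v\}}$ to be $0$ when $r_l=0$ and $1/\lambda^{k-1}$ otherwise, so $f_0^{H_v^s}(p)=\lambda-\sum_{e\in E_v(H)}q^{e\setminus\{v\}}-t/\lambda^{k-1}$, where $t$ counts the indices $l$ with $r_l\neq0$. Grouping the product by the value of $t$ and by the $\mathrm{C}_s^t$ choices of which pendent varieties are nonzero, and summing the factored multiplicities using $\sum_{0\neq r\in\mathcal{V}^{e_l}}m_{e_l}(r)=K_2$ and $m_{e_l}(0)=K_1$ from Theorem~\ref{3}, the exponent attached to a fixed $q$ and a fixed size-$t$ choice is $m_H(q)K_1^{\,s-t}K_2^{\,t}$. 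Collecting the $\mathrm{C}_s^t$ equal choices and recognizing $\prod_{q\in\mathcal{V}^H}(\lambda-\sum_e q^{e\setminus\{v\}}-t/\lambda^{k-1})^{m_H(q)}$ as $M_H(\lambda,t/\lambda^{k-1})$ by Eq.~\eqref{shizi5} turns the product into $\prod_{t=0}^s M_H(\lambda,t/\lambda^{k-1})^{\,\mathrm{C}_s^t K_1^{\,s-t}K_2^{\,t}}$, and multiplying the two factors gives the stated formula. The delicate points are the uniform bookkeeping of the $\mathrm{C}_s^t$ subsets of nonzero pendent components and checking that the telescoped $\lambda$-exponent really simplifies to $s(k-1)^{\,n-1+s(k-1)}$. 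An alternative route by induction on $s$ via Theorem~\ref{5} applied to $H_v^{s-1}$ would instead hinge on Pascal's identity $\mathrm{C}_s^t=\mathrm{C}_{s-1}^t+\mathrm{C}_{s-1}^{t-1}$, but it first requires expressing $M_{H_v^{s-1}}$ through the same product decomposition, so the direct computation seems cleaner.
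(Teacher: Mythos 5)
Your proposal is correct, but it takes a genuinely different route from the paper. The paper proves this theorem by induction on $s$: it applies Theorem \ref{5} to $H_v^{s-1}$, and the heart of the argument is tracking how the translational fractions transform under one attachment, namely $M_{H_v}(\lambda,\frac{t}{\lambda^{k-1}}) = M_H(\lambda,\frac{t}{\lambda^{k-1}})^{K_1}M_H(\lambda,\frac{t+1}{\lambda^{k-1}})^{K_2}$, so that the binomial coefficients $\mathrm{C}_s^t$ emerge from iterating this recursion --- exactly the Pascal-identity route you mention and set aside at the end of your proposal. You instead generalize the \emph{proof} of Theorem \ref{5} rather than its statement: a single application of the Poisson formula (Eq.\ (\ref{8261})) to $H_v^s$, with the variety decomposition $\mathcal{V}^{H_v^s}=\mathcal{V}^H\times\mathcal{V}^{e_1}\times\cdots\times\mathcal{V}^{e_s}$ and multiplicative multiplicities (the $s+1$-factor analogue of the two-factor decomposition the paper uses for $s=1$), the stripping of the $s(k-1)$ pendent variables via Lemma \ref{4}, and Theorem \ref{3} together with Eq.\ (\ref{shizi4}) to evaluate $f_0^{H_v^s}$ on each point. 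Your exponent bookkeeping is right: the telescoped $\lambda$-exponent $s(k-1)^{n-1+s(k-1)}$ and, after raising to the power $k-1$, the prefactor $\lambda^{s(k-1)^{n+s(k-1)}}\phi_{H-v}(\lambda)^{(k-1)^{s(k-1)+1}}$ match the statement, and the count $\mathrm{C}_s^t K_1^{s-t}K_2^t$ falls out of choosing which pendent components are nonzero, with $m_{e_l}(0)=K_1$ and $\sum_{0\neq r}m_{e_l}(r)=K_2$. The trade-off: your direct computation is self-contained and makes the combinatorial origin of $\mathrm{C}_s^t$ transparent, at the cost of redoing the resultant manipulations of Theorem \ref{5} in greater generality; the paper's induction reuses Theorem \ref{5} as a black box and is shorter on the page, but hides the combinatorics inside the recursion for the translational fractions.
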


\begin{proof}
From Theorem \ref{5}, we have
\begin{align}\notag
{\phi _{{H_v}}}(\lambda ) &= {\lambda ^{{{(k - 1)}^{n + k - 1}}}}{\phi _{{H-v}}}{(\lambda)^{{{(k - 1)}^{k}}}}M_H{(\lambda,0)^{{{(k - 1)}^{k - 1}} - {k^{k - 2}}}}M_H(\lambda,\frac{1}{{{\lambda^{k-1}}}}){^{{k^{k - 2}}}} \\ \notag
& = {\left( {{\phi _{{H_v} - v}}(\lambda )} \right)^{k - 1}}{M_H}{(\lambda,0 )^{K_1}}{M_H}{(\lambda ,\frac{1}{{{\lambda ^{k - 1}}}})^{K_2}}.
\end{align}
Then
\begin{align} \label{911}
{M_{{H_v}}}(\lambda,0 ) = \frac{{{\phi _{{H_v}}}(\lambda )}}{{{{\left( {{\phi _{{H_v} - v}}(\lambda )} \right)}^{k - 1}}}} = {M_H}{(\lambda,0 )^{K_1}}{M_H}{(\lambda ,\frac{1}{{{\lambda ^{k - 1}}}})^{K_2}},
\end{align}
and
${M_{{H_v}}}(\lambda ,\frac{1}{{{\lambda ^{k - 1}}}}) = {M_H}{(\lambda ,\frac{1}{{{\lambda ^{k - 1}}}})^{{K_1}}}{M_H}{(\lambda ,\frac{2}{{{\lambda ^{k - 1}}}})^{{K_2}}}$. From Eq.(\ref{911}), we get
\begin{align} \notag
{M_{H_v^2}}(\lambda,0 ) &= {M_{{H_v}}}{(\lambda ,0)^{{K_1}}}{M_{{H_v}}}{(\lambda ,\frac{1}{{{\lambda ^{k - 1}}}})^{{K_2}}} \\ \notag
 &= \prod\limits_{t = 0}^2 {{M_H}{{(\lambda ,\frac{t}{{{\lambda ^{k - 1}}}})}^{\mathrm{C}^t_2 K_{_1}^{2 - t}K_{_2}^t}}}.
\end{align}
By induction, we obtain
\[{M_{H_v^s}}(\lambda )=\frac{{{\phi _{H_v^s}}(\lambda )}}{{{\phi _{H_v^s - v}}{{(\lambda )}^{k - 1}}}} = \prod\limits_{t = 0}^s {{M_H}{{(\lambda ,\frac{t}{{{\lambda ^{k - 1}}}})}^{C^t_s K_{_1}^{s - t}K_{_2}^t}}}\] for $s\geq 1$.
Then
\begin{align} \notag
{\phi _{H_v^s}}(\lambda ) &= {\phi _{H_v^s - v}}{(\lambda )^{k - 1}}\prod\limits_{t = 0}^s {{M_H}{{(\lambda ,\frac{t}{{{\lambda ^{k - 1}}}})}^{\mathrm{C}^t_s K_{_1}^{s - t}K_{_2}^t}}} \\ \notag
 & = {\lambda ^{s{{(k - 1)}^{n + s(k - 1)}}}}{\phi _{H - v}}{(\lambda )^{{{(k - 1)}^{s(k - 1) + 1}}}}\prod\limits_{t = 0}^s {{M_H}{{(\lambda ,\frac{t}{{{\lambda ^{k - 1}}}})}^{\mathrm{C}^t_s K_{_1}^{s - t}K_{_2}^t}}} .
\end{align}

\end{proof}

We use Theorem \ref{5} to get the characteristic polynomial and all distinct eigenvalues of  $P_m^{(k)}$.  And we express the characteristic polynomial of  $P_m^{(k)}$  by the characteristic polynomial of path. For convenience, we prove it by induction.

\begin{thm}\label{lu}
The characteristic polynomial of the $k$-uniform hyperpath $P_m^{(k)}$ of length $m$ is
\[{\phi _{{\cal P}_m^{(k)}}}(\lambda ) = \prod\limits_{j = 0}^{m } {{\phi _{{P_j}}}{{({\lambda ^{\frac{k}{2}}})}^{^{a(j,m)}}}} ,\]
where ${\phi _{{P_j}}}(\lambda ) = \sum\limits_{t = 0}^{\left\lfloor {\frac{{j + 1}}{2}} \right\rfloor } {{{( - 1)}^t}\mathrm{C}_{j + 1 - t}^t{\lambda ^{j + 1 - 2t}}} $ is the characteristic polynomial of the path $P_j$,
\[a(j,m) = \left\{ \begin{array}{l}
 {K_2^m}{\kern 1pt} {\kern 1pt} {\kern 1pt} ,~~j = m, \\
 \left( {(m - j + 1)K_1 + 2K_2} \right)K_1{K_2^j}{(k-1)^{(m - j - 2)(k-1)}},~~1 \le j \le m - 1, \\
 \frac{2}{k}\left[ {m(k - 1) + 1} \right]{(k-1)^{m(k-1)}}  - \sum\limits_{r = 1}^{m } {(r + 1)a(r,m),~j = 0,}  \\
 \end{array} \right.\]
$K_1 = {(k - 1)^{k - 1}} - {k^{k - 2}}$ and $K_2 = {k^{k - 2}}$.
\end{thm}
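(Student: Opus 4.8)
The plan is to induct on $m$, using the reduced formula of Theorem \ref{5} as the engine. Observe that $P_m^{(k)}$ is obtained from $P_{m-1}^{(k)}$ by attaching a single pendant edge at an endpoint $v=v_{m-1}$, so $P_m^{(k)}=(P_{m-1}^{(k)})_v$ and Theorem \ref{5} gives
\begin{align}\notag
\phi_{P_m^{(k)}}(\lambda)=\lambda^{(k-1)^{n+k-1}}\phi_{P_{m-1}^{(k)}-v}(\lambda)^{(k-1)^{k}}M_{P_{m-1}^{(k)}}(\lambda,0)^{K_1}M_{P_{m-1}^{(k)}}(\lambda,\tfrac{1}{\lambda^{k-1}})^{K_2},
\end{align}
where $n=(m-1)(k-1)+1$ is the number of vertices of $P_{m-1}^{(k)}$ and $M_{P_{m-1}^{(k)}}$ is taken with respect to $v$. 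The base case $m=1$ is exactly Lemma \ref{2}; one checks directly that the claimed formula reproduces $\lambda^{k(k-1)^{k-1}-k^{k-1}}(\lambda^k-1)^{k^{k-2}}$, using $\phi_{P_0}(\lambda^{k/2})=\lambda^{k/2}$, $\phi_{P_1}(\lambda^{k/2})=\lambda^k-1$, $a(1,1)=K_2$ and the degree-balancing value of $a(0,1)$.

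To run the induction I must evaluate the three ingredients on the right-hand side. First, removing the endpoint $v$ deletes only the last edge, leaving $P_{m-2}^{(k)}$ together with the $k-2$ interior vertices of that edge, which become isolated; hence, by the multiplicativity of the resultant under disjoint unions of hypergraphs (obtained from the properties in Lemma \ref{4}), I can write $\phi_{P_{m-1}^{(k)}-v}$ as a power of $\phi_{P_{m-2}^{(k)}}$ times an explicit power of $\lambda$. Combining this with the inductive formulas for $\phi_{P_{m-2}^{(k)}}$ and $\phi_{P_{m-1}^{(k)}}$ immediately yields $M_{P_{m-1}^{(k)}}(\lambda,0)=\phi_{P_{m-1}^{(k)}}/\phi_{P_{m-1}^{(k)}-v}^{\,k-1}$ in closed form. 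The genuinely new object is the translational fraction $M_{P_{m-1}^{(k)}}(\lambda,\tfrac{1}{\lambda^{k-1}})$, which is not a ratio of characteristic polynomials and must be controlled separately.

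The heart of the argument is therefore a closed form for the translational fractions with respect to the moving endpoint. The approach is to analyze the affine variety $\mathcal{V}^{P_m^{(k)}}$ directly, as in the proof of Theorem \ref{3}: at a point $p$, the equations coming from the $k-2$ interior vertices $w_1,\dots,w_{k-2}$ of the last edge $e_m=\{v_m,v_{m-1},w_1,\dots,w_{k-2}\}$ force, on the nonzero branch, $\lambda w_j^{k}=x_{v_{m-1}}\prod_l w_l$ for all $j$, whence $p^{\,e_m\setminus v_m}=\pm\, x_{v_{m-1}}^{k/2}/\lambda^{(k-2)/2}$. Consequently each factor of $M_{P_m^{(k)}}(\lambda,0)=\prod_p(\lambda-p^{e_m\setminus v_m})^{m(p)}$ has the shape $\lambda^{-(k-2)/2}(\lambda^{k/2}\mp x_{v_{m-1}}^{k/2})$, which is precisely the source of the substitution $\lambda\mapsto\lambda^{k/2}$ and of the path polynomials $\phi_{P_j}(\lambda^{k/2})$. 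Performing the shift, $\lambda-p^{e_m\setminus v_m}-t/\lambda^{k-1}$ becomes, after multiplying by $\lambda^{(k-2)/2}$, the quadratic $\lambda^{-k/2}(\lambda^{k}\mp x_{v_{m-1}}^{k/2}\lambda^{k/2}-t)$ in the variable $\lambda^{k/2}$; matching this against the Lov\'{a}sz--Pelik\'{a}n recursion $\phi_{P_j}(z)=z\phi_{P_{j-1}}(z)-\phi_{P_{j-2}}(z)$ coming from Theorem \ref{L} lets me express $M_{P_m^{(k)}}(\lambda,\tfrac{t}{\lambda^{k-1}})$ as an explicit product of the $\phi_{P_j}(\lambda^{k/2})$ with integer exponents. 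I would carry these exponents as a second inductive hypothesis alongside the formula for $\phi_{P_m^{(k)}}$.

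Finally I substitute the closed forms of $\phi_{P_{m-1}^{(k)}-v}$, $M_{P_{m-1}^{(k)}}(\lambda,0)$ and $M_{P_{m-1}^{(k)}}(\lambda,\tfrac{1}{\lambda^{k-1}})$ into the displayed reduced formula and collect, for each $j$, the total exponent of $\phi_{P_j}(\lambda^{k/2})$. This produces a recursion expressing $a(j,m)$ through $a(j,m-1)$ and $a(j,m-2)$ with coefficients built from $K_1$ and $K_2$, which I verify agrees with the three-case closed form in the statement; the exponent $K_2^{m}$ of the top term $\phi_{P_m}(\lambda^{k/2})$ arises from the single all-nonzero branch propagating through every step, while the $j=0$ exponent is not independent but is fixed by matching total degrees against $\deg\phi_{P_m^{(k)}}=N(k-1)^{N-1}$ with $N=m(k-1)+1$, which reproduces exactly the stated formula for $a(0,m)$. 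The main obstacle is the third step: proving the closed form for the translational fractions of the moving endpoint and managing the two-index exponent bookkeeping so that it collapses precisely to $a(j,m)$.
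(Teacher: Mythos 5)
Your plan follows the paper's proof almost step for step: induction on $m$ powered by Theorem \ref{5}, the same treatment of $\phi_{P_{m-1}^{(k)}-v}$ (deleting the endpoint leaves $P_{m-2}^{(k)}$ plus $k-2$ isolated vertices, handled through Lemma \ref{4}), the same computation of $M_{P_{m-1}^{(k)}}(\lambda,0)$ from two inductive hypotheses, the recursion $\phi_{P_j}(z)=z\phi_{P_{j-1}}(z)-\phi_{P_{j-2}}(z)$ as the engine that turns the $\frac{1}{\lambda^{k-1}}$-shift of the index-$j$ factor into the index-$(j+1)$ factor, and degree matching to fix $a(0,m)$, which reproduces exactly the paper's third case. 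The one genuine divergence is how you justify the translational-fraction step. The paper stays formal: from Eq.~(\ref{11}) it writes $M_{P_{m_0}^{(k)}}(\lambda,0)$ as a product of ratios $\lambda^{(2-k)/2}\phi_{P_j}(\lambda^{k/2})/\phi_{P_{j-1}}(\lambda^{k/2})$, converts each ratio to the form $\lambda-\frac{\phi_{P_{j-2}}(\lambda^{k/2})}{\lambda^{(k-2)/2}\phi_{P_{j-1}}(\lambda^{k/2})}$ via Eq.~(\ref{906}), and then shifts every factor by $\frac{1}{\lambda^{k-1}}$. You instead go back to the affine variety, as in Theorem \ref{3}, showing $p^{e_m\setminus\{v_m\}}=\pm x_{v_{m-1}}^{k/2}/\lambda^{(k-2)/2}$ on the nonzero branch (a computation that checks out) and carrying the resulting factor data as a second inductive hypothesis. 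What your version buys is rigor at the point the paper glosses over: $M_H(\lambda,\frac{t}{\lambda^{k-1}})$ is defined through the multiset of values $\sum_{e\in E_v(H)}p^{e\setminus\{v\}}$ over variety points, and that multiset is not determined by the rational function $M_H(\lambda,0)$ alone, so shifting the factors of a closed form is legitimate only once one knows those factors are the actual variety values with the stated multiplicities --- which is exactly what your second inductive hypothesis records. What the paper's route buys is economy: no variety analysis beyond Theorem \ref{3} is ever needed, and the two routes merge anyway at the same identity (your quadratic $z^2\mp x_{v_{m-1}}^{k/2}z-t$ in $z=\lambda^{k/2}$ is Eq.~(\ref{906}) with denominators cleared), after which the two-index exponent bookkeeping you flag as the main obstacle is precisely the calculation the paper performs when substituting $M_{P_{m_0}^{(k)}}(\lambda,0)$ and $M_{P_{m_0}^{(k)}}(\lambda,\frac{1}{\lambda^{k-1}})$ back into Theorem \ref{5}.
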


\begin{proof}
By the reduced formula in Theorem \ref{5} and induction, we give this proof. When $m=1$, it follows from Lemma \ref{2} that
\begin{align} \notag
{\phi _{P_1^{(k)}}}(\lambda ) &= {\lambda ^{k{{(k - 1)}^{k - 1}} - {k^{k - 1}}}}{\left( {{\lambda ^k} - 1} \right)^{{k^{k - 2}}}} \\ \notag
&= (\lambda^k)^{K_1}   {\left( {{\lambda ^k} - 1} \right)^{K_2}} \\   \notag
&= \prod\limits_{j = 0}^{1 } {{\phi _{{P_j}}}{{({\lambda ^{\frac{k}{2}}})}^{^{a(j,1)}}}}.
\end{align}
Let $V({P_1^{(k)}})=\{0,1,\ldots,k-1\}$. Then
\[{\phi _{{\cal P}_1^{(k)}{\rm{ - }}{0}}}(\lambda ) = {\lambda ^{{{(k - 1)}^{k - 1}}}} = {\lambda ^{K_1+K_2}}.\]
So
\begin{align} \notag
&M_{{{\cal P}_1^{(k)}}}(\lambda,0 )= \frac{{{\phi _{{\cal P}_1^{(k)}}}(\lambda )}}{{{\phi _{{\cal P}_1^{(k)}{\rm{ - }}{0}}}{{(\lambda )}^{k - 1}}}} = {\lambda ^{K_1}}{\left( {\lambda  - \frac{1}{{{\lambda ^{k - 1}}}}} \right)^{K_2}}  \notag
\end{align}
and \begin{align} \notag
 &M_{{{\cal P}_1^{(k)}}}(\lambda,\frac{1}{{{\lambda ^{k - 1}}}}) = {\left( {\lambda  - \frac{1}{{{\lambda ^{k - 1}}}}} \right)^{K_1}}{\left( {\lambda  - \frac{2}{{{\lambda ^{k - 1}}}}} \right)^{K_2}}.  \notag
\end{align}
Then from the reduced formula in Theorem \ref{5}, we get
\begin{align}\notag
{\phi _{{\cal P}_2^{(k)}}}(\lambda ) &= {\lambda ^{(2(k - 1) + 1){{(K_1 + K_2)}^2} - 2K_1K_2k -K_2^2k}}{\left( {{\lambda ^k} - 1} \right)^{2K_1K_2}}{\left( {{\lambda ^k} - 2} \right)^{{K_2^2}}} \\ \notag
&= \prod\limits_{j = 0}^{2 } {{\phi _{{P_j}}}{{({\lambda ^{\frac{k}{2}}})}^{^{a(j,2)}}}} .\notag
\end{align}
Assume that for $m_0 \ge 2$, $${\phi _{{\cal P}_{m_0}^{(k)}}}(\lambda ) = \prod\limits_{j = 0}^{m_0 } {{\phi _{{P_j}}}{{({\lambda ^{\frac{k}{2}}})}^{^{a(j,m_0)}}}}$$
and
$${\phi _{{\cal P}_{m_0-1}^{(k)}}}(\lambda ) = \prod\limits_{j = 0}^{m_0-1 } {{\phi _{{P_j}}}{{({\lambda ^{\frac{k}{2}}})}^{^{a(j,m_0-1)}}}}.$$
Let  $v$ be the pendent vertex  in ${P_{m_0}^{(k)}}$. Then
\begin{align}\notag
{\phi _{P_{{m_0}}^{(k)} - v}}(\lambda )&= {\lambda ^{\left( {k - 2} \right){{\left( {k - 1} \right)}^{{m_0}(k - 1) - 1}}}}{\left( {{\phi _{P_{{m_0} - 1}^{(k)}}}(\lambda )} \right)^{{{\left( {k - 1} \right)}^{k - 2}}}}\\ \notag
&={\lambda ^{\left( {k - 2} \right){{\left( {k - 1} \right)}^{{m_0}(k - 1) - 1}}}}{\left(  \prod\limits_{j = 0}^{m_0-1 } {{\phi _{{P_j}}}{{({\lambda ^{\frac{k}{2}}})}^{^{a(j,m_0-1)}}}}  \right)^{{{\left( {k - 1} \right)}^{k - 2}}}}.
\end{align}
So
\begin{align}\notag \label{11}
&M_{P_{{m_0}}^{(k)}}(\lambda,0 ) = \frac{{{\phi _{P_{{m_0}}^{(k)}}}(\lambda )}}{{{\phi _{P_{{m_0}}^{(k)} - v}}{{(\lambda )}^{k - 1}}}},\\
&= {\lambda ^{K_1{{\left( {K_1 + K_2} \right)}^{{m_0} - 1}}}}{\prod\limits_{j = 1}^{{m_0} - 1} {\left( {{\lambda ^{\frac{{2 - k}}{2}}}\frac{{{\phi _{{P_j}}}({\lambda ^{\frac{k}{2}}})}}{{{\phi _{{P_{j - 1}}}}({\lambda ^{\frac{k}{2}}})}}} \right)} ^{{{(K_1 + K_2)}^{{m_0} - 1 - j
}}{K_2^j}K_1}}{\left( {{\lambda ^{\frac{{2 - k}}{2}}}\frac{{{\phi _{{P_{{m_0}}}}}({\lambda ^{\frac{k}{2}}})}}{{{\phi _{{P_{{m_0} - 1}}}}({\lambda ^{\frac{k}{2}}})}}} \right)^{{K_2^{{m_0}}}}}.
\end{align}

By  Theorem \ref{L}, we have ${\phi _{P_{{0}}}}(\lambda )=\lambda$, ${\phi _{P_{{1}}}}(\lambda )=\lambda^2-1$. By  Theorem \ref{6}, we have
\begin{align}\label{10}
{\phi _{{P_{j}}}}(\lambda ) = \lambda {\phi _{{P_{j-1}}}}(\lambda ) - {\phi _{{P_{j - 2}}}}(\lambda ) .
\end{align}
It follows from Eq.(\ref{10}) that ${\phi _{P_{{-1}}}}(\lambda )=1$ when $j=1$.
Replace $\lambda$ with ${\lambda ^{\frac{k}{2}}}$ in Eq. (\ref{10}), we get ${\phi _{{P_{j}}}}({\lambda ^{\frac{k}{2}}} ) = {\lambda ^{\frac{k}{2}}}{\phi _{{P_{j-1}}}}({\lambda ^{\frac{k}{2}}} ) - {\phi _{{P_{j - 2}}}}({\lambda ^{\frac{k}{2}}} )$. Then
\[\frac{{{\phi _{{P_j
}}}( {\lambda ^{\frac{k}{2}}})}}{{{\phi _{{P_{j - 1}}}}( {\lambda ^{\frac{k}{2}}} )}} = {\lambda ^{\frac{k}{2}}}  - \frac{{{\phi _{{P_{j - 2}}}}( {\lambda ^{\frac{k}{2}}})}}{{{\phi _{{P_{j - 1}}}}( {\lambda ^{\frac{k}{2}}}  )}}.\]
Therefore,
\begin{align}\label{906}
 {\lambda ^{\frac{{2 - k}}{2}}}\frac{{{\phi _{{P_{j}}}}({\lambda ^{\frac{k}{2}}})}}{{{\phi _{{{\mathop{\rm P}\limits} _{j-1}}}}({\lambda ^{\frac{k}{2}}})}} = \lambda  - \frac{{{\phi _{{P_{j - 2}}}}({\lambda ^{\frac{k}{2}}})}}{{{\lambda ^{\frac{{k - 2}}{2}}}{\phi _{{P_{j-1}}}}({\lambda ^{\frac{k}{2}}})}}.
\end{align}
From Eq. (\ref{11}) and Eq. (\ref{906}), it yields that
\begin{align}\notag
&M_{P_{{m_0}}^{(k)}}(\lambda,0 )= \\ \notag
&{\lambda ^{K_1{{\left( {K_1 + K_2} \right)}^{{m_0} - 1}}}}{\prod\limits_{j = 1}^{{m_0} - 1} {\left( \lambda  - \frac{{{\phi _{{P_{j - 2}}}}({\lambda ^{\frac{k}{2}}})}}{{{\lambda ^{\frac{{k - 2}}{2}}}{\phi _{{P_{j-1}}}}({\lambda ^{\frac{k}{2}}})}} \right)} ^{{{(K_1 + K_2)}^{{m_0} - 1 - j}}K_1{K_2^j}}}{\left( \lambda  - \frac{{{\phi _{{P_{m_0 - 2}}}}({\lambda ^{\frac{k}{2}}})}}{{{\lambda ^{\frac{{k - 2}}{2}}}{\phi _{{P_{m_0-1}}}}({\lambda ^{\frac{k}{2}}})}} \right)^{{K_2^{{m_0}}}}}.
\end{align}
Next, we give the representation of $M_{P_{{m_0}}^{(k)}} \left( {\lambda,\frac{1}{{{\lambda ^{k - 1}}}}} \right)$. Since
\begin{align}\notag
 \lambda  - \frac{{{\phi _{{P_{j - 2}}}}({\lambda ^{\frac{k}{2}}})}}{{{\lambda ^{\frac{{k - 2}}{2}}}{\phi _{{P_{j-1}}}}({\lambda ^{\frac{k}{2}}})}} - \frac{1}{{{\lambda ^{k - 1}}}}& = {\lambda ^{\frac{{2 - k}}{2}}}\frac{{{\phi _{{P_{j}}}}({\lambda ^{\frac{k}{2}}})}}{{{\phi _{{{\mathop{ P}\limits} _{j-1}}}}({\lambda ^{\frac{k}{2}}})}} - \frac{1}{{{\lambda ^{k - 1}}}} \\ \notag
& =\frac{{{\lambda ^{\frac{k}{2}}}{\phi _{{P_j}}}({\lambda ^{\frac{k}{2}}}) - {\phi _{{P_{j- 1}}}}({\lambda ^{\frac{k}{2}}})}}{{{\lambda ^{k-1}}{\phi _{{P_{j - 1}}}}({\lambda ^{\frac{k}{2}}})}} \\ \notag
&=\frac{{{{\phi _{{P_{j+ 1}}}}({\lambda ^{\frac{k}{2}}})}}}{{{\lambda ^{k-1}}{\phi _{{P_{j - 1}}}}({\lambda ^{\frac{k}{2}}})}},
\end{align}
we have
\begin{align}\notag
&M_{P_{{m_0}}^{(k)}} \left( {\lambda,\frac{1}{{{\lambda ^{k - 1}}}}} \right) = \\ \notag
&{(\frac{{{\lambda ^k} - 1}}{{{\lambda ^{k - 1}}}})^{K_1{{\left( {K_1 + K_2} \right)}^{{m_0} - 1}}}}{\prod\limits_{j = 1}^{{m_0} - 1} {\left( {\frac{{{\phi _{{P_{j + 1}}}}({\lambda ^{\frac{k}{2}}})}}{{{\lambda ^{k - 1}}{\phi _{{P_{j - 1}}}}({\lambda ^{\frac{k}{2}}})}}} \right)} ^{{{(K_1 + K_2)}^{{m_0} - 1 - j}}K_1{K_2^j}}}{\left( {\frac{{{\phi _{{P_{{m_{0 + 1}}}}}}({\lambda ^{\frac{k}{2}}})}}{{{\lambda ^{k - 1}}{\phi _{{P_{{m_0} - 1}}}}({\lambda ^{\frac{k}{2}}})}}} \right)^{{K_2^{{m_0}}}}}.
\end{align}
Then from Theorem \ref{5}, we  obtain
\begin{align} \notag
{\phi _{{\cal P}_{m_0+1}^{(k)}}}(\lambda )&=\lambda^{(k-1)^{(m_0+1)(k-1)+1}}\left(\phi_{P_{m_0}^{(k)}-v}(\lambda)\right)^{(k-1)^k}M_{P_{{m_0}}^{(k)}} \left( {\lambda} \right)^{K_1}M_{P_{{m_0}}^{(k)}} \left( {\lambda,\frac{1}{{{\lambda ^{k - 1}}}}} \right)^{K_2}\\ \notag
&= \prod\limits_{j = 0}^{m_0+1 } {{\phi _{{P_j}}}{{({\lambda ^{\frac{k}{2}}})}^{^{a(j,m_0+1)}}}} .
\end{align}
By  induction, we get
\[{\phi _{{\cal P}_m^{(k)}}}(\lambda ) = \prod\limits_{j = 0}^{m } {{\phi _{{P_j}}}{{({\lambda ^{\frac{k}{2}}})}^{^{a(j,m)}}}} .\]

\end{proof}

 From Theorem \ref{Cve3}, we know that all the distinct eigenvalues of a path ${P}_m$ are  $2\cos \frac{\pi t }{{m + 2}}$,  $t=1,2,\ldots,m+1$ i.e. ${\phi _{{P_m}}}(\lambda ) = \prod\limits_{t=1}^{m+1} {\left( {\lambda  - 2\cos \frac{\pi }{{m + 2}}t} \right)}$. Then ${\phi _{{P_m}}}({\lambda ^{\frac{k}{2}}}) = \prod\limits_{t = 1}^{m + 1} {\left( {{\lambda ^{\frac{k}{2}}} - 2\cos \frac{\pi }{{m + 2}}t} \right)}$. From Theorem \ref{lu}, we directly get the following result.

\begin{thm}\label{8}
 The distinct eigenvalues of the $k$-uniform hyperpath $P_m^{(k)}$ are  the different numbers of ${\left( {2\cos \frac{\pi }{{j+2 }}t} \right)^{\frac{2}{k}}}{e^{\mathbf{i}\frac{{2\pi}}{k}\theta }}$  for all $j \in [m]$, $t \in [j+1]$ and $\theta \in [k]$, where $\mathbf{i}^2=-1$.
\end{thm}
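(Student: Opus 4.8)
The plan is to identify the spectrum of $P_m^{(k)}$ with the root set of its characteristic polynomial and then read off the roots from the factorization in Theorem \ref{lu}. For the adjacency tensor a scalar $\lambda$ is an eigenvalue precisely when the system $\lambda x^{[k-1]}-\mathcal{A}_{P_m^{(k)}}x^{k-1}=0$ has a nonzero solution, i.e. precisely when the resultant $\phi_{P_m^{(k)}}(\lambda)$ vanishes; hence the set of distinct eigenvalues equals the set of distinct complex roots of $\phi_{P_m^{(k)}}$. By Theorem \ref{lu},
\[\phi_{P_m^{(k)}}(\lambda)=\prod_{j=0}^{m}\phi_{P_j}(\lambda^{k/2})^{a(j,m)},\]
so the distinct roots are the union, taken over those $j$ with $a(j,m)>0$, of the solution sets of $\phi_{P_j}(\lambda^{k/2})=0$. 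The exponents $a(j,m)$ are nonnegative (being multiplicities in a genuine polynomial factorization); for $k\ge 3$ every $j\in[m]$ satisfies $a(j,m)>0$, while the $j=0$ factor $\lambda^{(k/2)a(0,m)}$ contributes only the root $\lambda=0$.

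For the second step I would factor each path polynomial through its spectrum. By Theorem \ref{Cve3}, $\phi_{P_j}(\mu)=\prod_{t=1}^{j+1}\bigl(\mu-2\cos\frac{\pi t}{j+2}\bigr)$, and the substitution $\mu=\lambda^{k/2}$ gives
\[\phi_{P_j}(\lambda^{k/2})=\prod_{t=1}^{j+1}\Bigl(\lambda^{k/2}-2\cos\tfrac{\pi t}{j+2}\Bigr).\]
Thus a root $\lambda$ satisfies $\lambda^{k/2}=2\cos\frac{\pi t}{j+2}$ for some $t\in[j+1]$; squaring gives $\lambda^{k}=\bigl(2\cos\frac{\pi t}{j+2}\bigr)^{2}$, whose $k$ solutions are $\lambda=\bigl(2\cos\frac{\pi t}{j+2}\bigr)^{2/k}e^{\mathbf{i}\frac{2\pi}{k}\theta}$ for $\theta\in[k]$, where $(\cdot)^{2/k}$ denotes a fixed $k$-th root of the square. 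Collecting these over all admissible $j$, all $t\in[j+1]$ and all $\theta\in[k]$ produces exactly the list in the statement.

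The delicate point, and the step I expect to require the most care, is that squaring $\lambda^{k/2}=2\cos\frac{\pi t}{j+2}$ into $\lambda^{k}=(2\cos\frac{\pi t}{j+2})^{2}$ could a priori introduce spurious roots; moreover, when $k$ is odd $\lambda^{k/2}$ is not single-valued, so $\phi_{P_j}(\lambda^{k/2})$ is not manifestly a polynomial in $\lambda$. I would resolve this using the negation symmetry of the path spectrum: the map $t\mapsto j+2-t$ is an involution of $[j+1]$ with $2\cos\frac{\pi(j+2-t)}{j+2}=-2\cos\frac{\pi t}{j+2}$, so the multiset $\{2\cos\frac{\pi t}{j+2}\}_{t\in[j+1]}$ is invariant under sign change. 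Hence for odd $j$ the polynomial $\phi_{P_j}$ is even and $\phi_{P_j}(\lambda^{k/2})=\prod(\lambda^{k}-(2\cos\frac{\pi t}{j+2})^{2})$ is a genuine polynomial in $\lambda^{k}$, while for even $j$ the single middle value $2\cos\frac{\pi t}{j+2}=0$ (at $t=\frac{j+2}{2}$) contributes the factor $\lambda^{k/2}$, i.e. the root $\lambda=0$, and the remaining values again pair into factors $\lambda^{k}-c^{2}$. In every case the ``extraneous'' solution $\lambda^{k/2}=-2\cos\frac{\pi t}{j+2}$ created by squaring is itself a legitimate solution coming from the partner index $j+2-t$, so no root is gained or lost and the solution set of $\phi_{P_j}(\lambda^{k/2})=0$ is exactly $\{\lambda:\lambda^{k}=(2\cos\frac{\pi t}{j+2})^{2},\ t\in[j+1]\}$.

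Finally I would tidy the index bookkeeping. The root $\lambda=0$ furnished by the $j=0$ factor (and by the middle term of each even $j$) already occurs among the listed values whenever some admissible even $j\in[m]$ has $t=\frac{j+2}{2}$, for instance $j=2,\ t=2$ when $m\ge 2$, so restricting the display to $j\in[m]$ loses no eigenvalue; conversely every listed number is a root. Taking the union over $j\in[m]$, $t\in[j+1]$, $\theta\in[k]$ and discarding repetitions then yields precisely the distinct eigenvalues of $P_m^{(k)}$, which completes the argument.
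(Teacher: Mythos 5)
Your proposal is correct and takes essentially the same route as the paper: the paper's entire proof consists of observing that $\phi_{P_j}(\lambda^{\frac{k}{2}})=\prod_{t=1}^{j+1}\bigl(\lambda^{\frac{k}{2}}-2\cos\frac{\pi t}{j+2}\bigr)$ (via Theorem \ref{Cve3}) and that Theorem \ref{lu} then ``directly'' gives the result, so everything else you wrote --- eigenvalues being exactly the roots of the resultant, positivity of the exponents $a(j,m)$ for $k\ge 3$, and the sign-symmetry pairing $t\mapsto j+2-t$ that makes the multivalued $\lambda^{\frac{k}{2}}$ harmless --- is detail the paper leaves unstated.

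One point your own bookkeeping exposes: your final step recovers the eigenvalue $0$ only from an even $j\in[m]$ (your example $j=2$, $t=2$), which requires $m\ge 2$; for $m=1$ and $k\ge 3$ the value $0$ \emph{is} an eigenvalue of $P_1^{(k)}$ (by Lemma \ref{2} its characteristic polynomial carries the factor $\lambda^{kK_1}$ with $K_1=(k-1)^{k-1}-k^{k-2}>0$, coming from the $j=0$ factor with $a(0,1)=2K_1$), yet it does not appear in the displayed list, whose members all have modulus $1$. So the assertion ``restricting the display to $j\in[m]$ loses no eigenvalue'' fails at $m=1$; this is a defect of the theorem statement itself, shared equally by the paper's two-line proof, rather than a flaw in your approach, but a complete write-up should either exclude $m=1$, adjoin the eigenvalue $0$ explicitly, or let $j$ range over $\{0,1,\ldots,m\}$.
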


Let $\rho(P_m^{(k)})$ be the the spectral radius of $P_m^{(k)}$. In 2016, Lu and Man proved that $\mathop {\lim }\limits_{m \to \infty } \rho(P_m^{(k)}) = \sqrt[k]{4}$ (see \cite{Lu}). From Theorem \ref{8}, we know that $\rho(P_m^{(k)} )={\left( {2\cos \frac{\pi }{{m + 2}}} \right)^{\frac{2}{k}}} $.

\section*{Acknowledgement}
 Supported  by the National Natural Science Foundation of China (No. 11801115 and No. 11601102), the Natural Science Foundation of the Heilongjiang Province (No. QC2018002) and the Fundamental Research Funds for the Central Universities.

\vspace{3mm}
\noindent
\textbf{References}

\end{document}